\def\includegraphics{}
\newtheorem{thm}{Theorem}[section]
\newtheorem*{thmpart}{Theorem}
\newtheorem{df}{Definition}
\newtheorem*{rem}{Remark}
\newtheorem*{ex}{Example}
\newtheorem{pr}[thm]{Proposition}
\newcommand{\Z}{\mathbb{Z}}
\newcommand{\Q}{\mathbb{Q}}
\newcommand{\C}{\mathbb{C}}
\renewcommand{\H}{\mathbb{H}}
\newcommand{\calP}{\mathcal{P}}
\newcommand{\sgn}{\mathrm{sgn}}
\newcommand{\calQ}{\mathcal{Q}}
\newcommand{\leg}[2]{\genfrac{(}{)}{}{}{#1}{#2}}
\newcommand{\qbrack}[1]{\langle #1 \rangle_q}
\begin{document}

\begin{frontmatter}

\begin{fmbox}
\dochead{Research}


\title{On $p$-adic modular forms and the Bloch-Okounkov theorem}


\author[
   addressref={aff1},                   
   email={mjg4@princeton.edu}   
]{\inits{MJG}\fnm{Michael J.} \snm{Griffin}}
\author[
   addressref={aff2},
   email={mjameson@utk.edu}
]{\inits{MJ}\fnm{Marie} \snm{Jameson}}
\author[
   addressref={aff3},
   corref={aff3},
   email={strebat@emory.edu}
]{\inits{STL}\fnm{Sarah} \snm{Trebat-Leder}}

\address[id=aff1]{
  \orgname{Department of Mathematics, Princeton University}, 
  \city{Princeton, New Jersey},                              
  \postcode{08544}                                
  \cny{USA}                                    
}
\address[id=aff2]{
  \orgname{Department of Mathematics, University of Tennessee},
  \city{Knoxville, Tennessee}
  \postcode{37996}
  \cny{USA}
}
\address[id=aff3]{
  \orgname{Department of Mathematics and Computer Science, Emory University},
  \city{Atlanta, Georgia}
  \postcode{30322}
  \cny{USA}
}


\begin{artnotes}
\end{artnotes}

\end{fmbox}


\begin{abstractbox}

\begin{abstract} 
Bloch-Okounkov studied certain functions on partitions $f$ called shifted symmetric polynomials. They showed that certain $q$-series arising from these functions (the so-called \emph{$q$-brackets} $\left<f\right>_q$) are quasimodular forms.  We revisit a family of such functions, denoted $Q_k$, and study the $p$-adic properties of their $q$-brackets.  To do this, we define regularized versions $Q_k^{(p)}$ for primes $p.$  We also use Jacobi forms to show that the $\left<Q_k^{(p)}\right>_q$ are quasimodular and find explicit expressions for them in terms of the $\left<Q_k\right>_q$.

\end{abstract}


\begin{keyword}
\kwd{congruences for modular forms}
\kwd{$p$-adic modular forms}
\kwd{jacobi forms}
\end{keyword}

\begin{keyword}[class=AMS]
\kwd{11F33, 11F50}
\end{keyword}

\end{abstractbox}
%

\end{frontmatter}



\section{Introduction and statement of results}

In \cite{Serre1973}, Serre introduced the theory of $p$-adic modular forms, which are $p$-adic limits of compatible families of $q$-expansions of classical level one modular forms.  The first example of this phenomenon comes from the Eisenstein series \[G_k(\tau) := -\frac{B_k}{2k} + \sum_{n = 1}^\infty \sigma_{k - 1}(n)q^n \in \widetilde{M}_k,\] where $k$ is a positive even integer, $\tau \in \H,$ $q = e^{2 \pi i \tau}$, $B_k$ is the $k$th Bernoulli number, $\sigma_{k - 1}(n)$ is the sum of the $k - 1$ powers of the divisors of $n$, and $\widetilde{M}_k$ is the space of weight $k$ quasimodular forms. For primes $p \geq 5$, we regularize $G_k$ to 
\[G_k^{(p)}(\tau) := \frac{-B_k^{(p)}}{2k} + \sum_{n \geq 1} \sigma_{k - 1}^{(p)}(n)q^n = G_k(\tau) - p^{k - 1}G_k(p\tau) \in \widetilde{M}_k(p),\]
where
\[\sigma_{k - 1}^{(p)}(n) := \sum_{d \mid n, (d, p) = 1} d^{k - 1} ,\; \; \text{ and } \; \; 
B_k^{(p)} := (1 - p^{k - 1})B_k.\]
In order to find congruences for these regularized Eisenstein series, we recall Euler's theorem, which says that if $(a, n) = 1$ then $a^{\phi(n)} \equiv 1 \pmod{n},$ where
\[\phi(n):= \#\{k \in \Z, 1 \leq k \leq n, (n, k) = 1\}\]
is Euler's totient function. Together, Euler's theorem and the Kummer congruences for the Bernoulli numbers give the following: if \mbox{$k_1, k_2 \not \equiv 0 \pmod{p - 1}$}, then \[G_{k_1}^{(p)} \equiv G_{k_2}^{(p)} \pmod{p^r} \; \; \text{ whenever }\; \;  k_1 \equiv k_2 \pmod{\phi(p^r)}.\] Since we also have that $G_{k}^{(p)} \equiv G_k \pmod{p^r}$ whenever $k > r$, this makes \[G_k^{(p)} = \lim_{r \to \infty} G_{k + \phi(p^r)}\] into a $p$-adic modular form for $k \not \equiv 0 \pmod{p - 1}$.  Note that if \mbox{instead} $k \equiv 0 \pmod{p - 1}$, then the constant term is not $p$-integral, but the normalized Eisenstein series $E_k(\tau)$ satisfies
$E_k(\tau) \equiv 1 \pmod{p^r}$ whenever $k \equiv 0 \pmod{\phi(p^r)}$. 

Katz \cite{Katz73} and others have reformulated and expanded this theory to consider $p$-adic modular forms as $p$-adic analytic functions on elliptic curves. However in this paper we will only consider $p$-adic modular forms in the sense of Serre.

In this article, we wish to examine the $p$-adic properties of certain quasimodular forms $\qbrack{Q_k}$ and show that they are in many ways analogous to the Eisenstein series $G_k$ and fit into Serre's framework. 

Let $\calP$ be the set of all integer partitions. For any function $f: \calP \to \Q$, we define the ``$q$-bracket of $f$'' to be the following formal power series obtained by ``averaging'':
\begin{equation}
\label{Q_Brak_Def}
\left<f\right>_q := \frac{\sum_{\lambda \in \calP} f(\lambda) q^{|\lambda|}}{\sum_{\lambda \in \calP} q^{|\lambda|}}= \eta(\tau)\sum_{\lambda \in \calP} f(\lambda) q^{|\lambda| -1/24},\end{equation}
where $|\lambda|$ denotes the size of a partition $\lambda$ and $\eta(\tau)$ is Dedekind's eta-function.

A result of Bloch and Okounkov \cite{BlochOkounkov2000} gives that for a large class of functions $f: \calP \to \Q$, called shifted symmetric polynomials, the $q$-series $\left<f\right>_q$ is a quasimodular form on the full modular group. The space of shifted symmetric polynomials is generated by functions $Q_k$, which we will define explicitly in Section \ref{definitions}; the first few are given by \[Q_0(\lambda) = 1, \,Q_1(\lambda) = 0, \,Q_2(\lambda) = |\lambda| - 1/24, \,\dots.\]  Note that we use the same letter $Q_k$ to denote both this function and the corresponding generator of the formal polynomial algebra \[\mathcal{R} = \Q[Q_1, Q_2, \dots]\] in infinitely many variables.  Then any element \[f = f(Q_1, Q_2, \dots) \in \mathcal{R}\] can be considered as a function on partitions by setting \[f(\lambda) = f(Q_1(\lambda), Q_2(\lambda), \dots)\] and we can speak of its $q$-bracket $\left<f\right>_q$.  We give $\mathcal{R}$ a grading by assigning to $Q_k$ the weight $k$. Then the Bloch-Okounkov Theorem states that for $f \in \mathcal{R}$ homogeneous of grading $k$, $\left<f\right>_q$ is a quasimodular form of weight $k$ on the full modular group.  In their paper, Bloch and Okounkov also defined so-called $n$-point correlation functions which are related to $\left<Q_k\right>_q,$ and gave a formula for them involving derivatives of a theta function. We will use a special case of this result in equation \eqref{BOThm6.5} of Section \ref{jacobi}.

Zagier revisited this work in \cite{Zagier2015}, giving a significantly shorter proof of the Bloch-Okounkov Theorem and studying additional properties of the $q$-bracket.  

In Section~\ref{definitions}, we will define a normalization $\calQ_k$ of $\qbrack{Q_k}$ and a regularization $\calQ_k^{(p)}$ of $\calQ_k$ at the prime $p \geq 5$ with the property that 
\begin{equation}\label{congruence_remark}
\calQ_k^{(p)} \equiv \calQ_k \pmod{p^{k-1}}.
\end{equation}
 We will show the following about these $q$-series.

\begin{thm}  
\label{mainthm} Let $p \geq 5$ be prime.
\begin{enumerate}[(a)]
\item If $k_1, k_2 \not \equiv 0 \pmod{p - 1}$, then $\calQ_{k_1}^{(p)} \equiv \calQ_{k_2}^{(p)} \pmod{p^r}$ whenever $k_1 \equiv k_2 \pmod{\phi(p^r)}.$
\item If $k \not \equiv 0 \pmod{p - 1}$, then $\calQ_{k}^{(p)}$ is a $p$-adic modular form.
\item If $p > k$, then the modulo $p$ filtration of $\calQ_k^{(p)}$ (and $\calQ_k$) is $k(p + 1)/2$.  
\item The $q$-series $\calQ_k^{(p)}$ is a quasimodular form of weight $k$ on $\Gamma_0(p^2)$.
\item We have that $\calQ_k^{(p)}(\tau) = \calQ_k(\tau) - p^{k - 1}\calQ_k(p^2 \tau) - p^{k - 1}f_k^{(p)},$ where $f_k^{(p)}$ is given explicitly in Section \ref{jacobi}. In particular, $f_k^{(p)}$ is supported on $q^N$ with $\leg{2}{p} = \leg{N}{p}$. 
\end{enumerate}
\end{thm}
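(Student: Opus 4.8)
The plan is to treat parts (d) and (e) as the analytic core---both flow from the Jacobi/theta-function description of the $q$-brackets---and then to extract the arithmetic statements (a), (b) from the resulting explicit $q$-expansion, handling the filtration (c) by a separate mod $p$ argument. The logical backbone is (e): once the explicit decomposition of $\mathcal{Q}_k^{(p)}$ is in hand, everything else is leverage.

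First I would recall the Bloch--Okounkov one-point function, i.e.\ the generating series of the $\langle Q_k\rangle_q$ expressed through derivatives of a Jacobi theta function via \eqref{BOThm6.5}. The decisive feature is that the $q$-exponents appearing there are \emph{quadratic} in the summation index (coming from an $\eta$-quotient times a $\vartheta_{11}$-type series). I would then reinterpret the regularization $\mathcal{Q}_k^{(p)}$ as the operation on this theta series that discards the terms whose index is divisible by $p$. Because the exponents are quadratic, an index $n$ replaced by $pn$ rescales the $q$-exponent by $p^2$; this is precisely the source of the $\mathcal{Q}_k(p^2\tau)$ term, in contrast to the $G_k(p\tau)$ of the Eisenstein case, where the exponents are linear in the divisor. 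The complementary sum---over indices coprime to $p$---is $f_k^{(p)}$, and its support lies on exponents $N$ which, up to squares, are a fixed multiple of $2$; this forces $\leg{N}{p}=\leg{2}{p}$. Tracking the constant prefactor (the $\tfrac{1}{8}$ or $\tfrac{1}{24}$ from $\eta$ and $\vartheta$) through the rescaling is what pins down this quadratic-residue condition. With (e) established, part (d) is nearly immediate: $\mathcal{Q}_k$ is quasimodular of weight $k$ on $\mathrm{SL}_2(\Z)$ by Bloch--Okounkov, so $\mathcal{Q}_k(p^2\tau)$ is quasimodular of weight $k$ on $\Gamma_0(p^2)$ by the standard scaling map, and the transformation law of the underlying theta series shows $f_k^{(p)}$ lands in the same space; summing the three pieces gives (d).

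For the congruences (a) and $p$-adicity (b) I would mirror the Eisenstein template of the introduction. The explicit formula expresses the coefficients of $\mathcal{Q}_k^{(p)}$ through $p$-regularized power sums together with a Bernoulli-type constant term; Euler's theorem controls the prime-to-$p$ power sums across weights and the Kummer congruences control the constant terms, yielding $\mathcal{Q}_{k_1}^{(p)}\equiv\mathcal{Q}_{k_2}^{(p)}\pmod{p^r}$ when $k_1\equiv k_2\pmod{\phi(p^r)}$ and $k_i\not\equiv0\pmod{p-1}$. Part (b) then follows exactly as for $G_k^{(p)}$: the congruences of (a) realize $\mathcal{Q}_k^{(p)}$ as the limit of the compatible family $\{\mathcal{Q}_{k+\phi(p^r)}^{(p)}\}$, and the regularization is precisely what renders this limit a genuine $p$-adic modular form, converting the quasimodular dependence on $E_2$ into Serre's $p$-adic modular $E_2^{(p)}$.

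Finally, for the filtration (c) I would pass to the mod $p$ reduction, where every quasimodular form becomes genuinely modular via $E_2\equiv E_{p+1}\pmod p$, so that a filtration is well defined. The key input is that $\mathcal{Q}_k$ is quasimodular of weight $k$ and maximal depth $k/2$, with a top $E_2$-coefficient that is a $p$-unit once $p>k$. Writing $\mathcal{Q}_k\equiv\sum_j f_j E_2^{\,j}\pmod p$ with $f_j$ modular of weight $k-2j$, the depth-$j$ piece has filtration at most $k+j(p-1)$, maximized at $j=k/2$ to give $k+\tfrac{k}{2}(p-1)=k(p+1)/2$; one then checks no cancellation occurs at the top (again using $p>k$, so that $E_{p+1}^{k/2}$ retains full filtration), and the congruence $\mathcal{Q}_k^{(p)}\equiv\mathcal{Q}_k\pmod{p^{k-1}}$ lets the same count apply verbatim to $\mathcal{Q}_k^{(p)}$. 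The step I expect to be the main obstacle is (e): correctly identifying $f_k^{(p)}$ and rigorously tracking the quadratic rescaling of the theta series, so that the $p^2$ factor, the precise shape of $f_k^{(p)}$, and the Legendre-symbol support all emerge consistently---since every other part leans on this explicit decomposition.
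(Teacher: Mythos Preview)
Your arguments for (a), (c), and the explicit computation in (e) are essentially the paper's. The substantive divergence is in how you order (d) and (e), and this creates a genuine gap.

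You propose to prove (e) first and then read (d) off the decomposition $\calQ_k^{(p)}=\calQ_k-p^{k-1}\calQ_k(p^2\tau)-p^{k-1}f_k^{(p)}$ by arguing that each summand is quasimodular on $\Gamma_0(p^2)$. The first two pieces are fine, but your claim that ``the transformation law of the underlying theta series shows $f_k^{(p)}$ lands in the same space'' is not justified. The series $f_k^{(p)}$ is a \emph{partial} double sum (over $n\geq1$ with $(n,p)=1$ and $M\geq0$), and such one-sided sums generally do not inherit clean modular transformation laws from the ambient theta function; there is no off-the-shelf theta transformation you can cite. In fact, if you try to verify modularity of $f_k^{(p)}$ directly, you are forced back to expressing the corresponding piece of the generating function as $\tfrac{1}{p}\sum_{j=0}^{p-1}F(z+j/p;\tau)-F(pz;p^2\tau)$ and checking how $\Gamma_0(p^2)$ permutes the shifted terms---which is exactly the paper's proof of (d), not a consequence of (e).

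The paper runs the logic the other way: it first shows $F^{(p)}(z;\tau)=F(z;\tau)-\tfrac{1}{p}\sum_{j=0}^{p-1}F(z+j/p;\tau)$ is a meromorphic Jacobi form of weight $1$, index $-2$ on $\Gamma_0(p^2)$ by a direct transformation calculation, and then extracts the quasimodularity of $\calQ_k^{(p)}$ via the raising operator $Y_{-2}$ together with a residue argument at $z=0$ (needed because $F^{(p)}$ has a pole there). Part (e) is then a separate explicit computation from the crank-type expansion of $F$. So your plan misidentifies where the work lies: (e) is the computation, but (d) requires the Jacobi-form input, not the other way around.

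One minor point on (b): the approximating family should be the \emph{unregularized} $\calQ_{k+\phi(p^r)}$, which are already known to be quasimodular by Bloch--Okounkov and hence $p$-adic modular; one combines (a) with the elementary congruence $\calQ_m^{(p)}\equiv\calQ_m\pmod{p^{m-1}}$ to get $\calQ_{k+\phi(p^r)}\equiv\calQ_k^{(p)}\pmod{p^r}$. Using the regularized family as you wrote would presuppose what you are trying to prove.
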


\begin{rem}
In \cite{Lopez2015}, Lopez studied the functions $\left<Q_{3}^{2n}\right>_q$ and showed that they also satisfy parts (a) and (b) of our theorem. It seems likely that other products of the $Q_k$ yield quasimodular forms which satisfy similar $p$-adic properties. 
\end{rem}

\begin{ex}
Consider the case where $p = 5$.  We have that the normalized $q$-bracket $\calQ_2$ and its regularization $\calQ_2^{(5)}$ are
\begin{align*}
\calQ_2 =& \boxed{\frac{-1}{24}} + q + 3q^2 + \boxed{4q^3} + 7q^4 + 6q^5 + 12q^6 + \boxed{8q^7} + \boxed{15q^8} + 13q^9 \\ &+ O(q^{10})  \in \widetilde{M}_2 \text{ and}\\
\calQ_2^{(5)} =& \boxed{\frac{1}{6}} + q + 3q^2 + \boxed{-1q^3 }+ 7q^4 + 6q^5 + 12q^6 + \boxed{13q^7} + \boxed{0q^8} + 13q^9  \\ &+ O(q^{10})  \in \widetilde{M}_2(25).\\
\end{align*}
Here, we have marked the terms above whose coefficients disagree; in accordance with part (e) of Theorem \ref{mainthm}, we see that this occurs only when $\leg{N}{5} = 0, -1$.

Since $2 \equiv 22 \pmod{\phi(25)}$, we also give the weight $22$ forms, which part (e) guarantees must differ from each other on the same powers of $q$:

\begin{align*}
\calQ_{22} &= \boxed{-162912981133/552 }+ q + 10460353203q^2 + \boxed{476837158203124q^3} \\
& +
558545864083284007q^4 + 109418989121052006006q^5\\  & + 
7400249944258160101212q^6 
 +\boxed{ 247064528596613234501288q^7} \\ & +
\boxed{4987885095119476318359375q^8} + 69091933354462879257896413q^9 \\ & + O(q^{10})  \in \widetilde{M}_{22} \text{ and}\\
\calQ_{22}^{(p)} &= \boxed{19420740739464719098414873/138} + q + 10460353203q^2 +\boxed{- q^3} \\ &+
558545864083284007q^4 + 109418989121052006006q^5 \\ & + 
7400249944258160101212q^6 + \boxed{247064529073450392704413q^7} \\ & +
\boxed{0q^8} + 69091933354462879257896413q^9 + O(q^{10})  \in \widetilde{M}_{22}(25).\\ 
\end{align*}
Notice that in accordance with part (a) of Theorem~\ref{mainthm} and (\ref{congruence_remark}), we have that \[\calQ_{22} \equiv \calQ_{22}^{(5)} \equiv \calQ_2^{(5)} \pmod{25}.\]
\end{ex}

In Section \ref{definitions}, we will explicitly define the functions $Q_k:\calP\rightarrow\mathbb{Q},$ as well as $\calQ_{k}^{(p)}$ and various other functions. In Section \ref{padic_section}, we will prove parts (a)-(c) of Theorem~\ref{mainthm}.  In Section \ref{jacobi}, we will make a connection to the theory of Jacobi forms in order to prove parts (d) and (e) of Theorem \ref{mainthm}. 

\section{Preliminary Definitions} \label{definitions}
\subsection{Definitions of $P_k(\lambda), Q_k(\lambda)$, and $\calQ_k(\tau)$} First, we must define the functions $Q_k:\calP\rightarrow\Q$ of Bloch and Okounkov (as described by Zagier in \cite{Zagier2015}).  For a partition $\lambda \in \calP,$ we first consider the Frobenius coordinates of $\lambda$, which are given by
\[(r; a_1, \ldots, a_r; b_1, \ldots, b_r),\]
where $r$ is the length of the longest diagonal in the Young diagram of $\lambda$ (i.e., $r$ is the size of the Durfee square of $\lambda$) and $a_1, \ldots a_r$ (resp., $b_1, \ldots, b_r$) are the arm-lengths (resp., leg-lengths) of the cells on this diagonal.  For example, the partition $\lambda = (4, 3, 1)$ has Frobenius coordinates $(2; 3,1; 2,0)$ as seen in the Young diagram below.

\Yboxdim{1cm}
\[\begin{tikzpicture}[scale=0.8]
\tyng(0cm,3cm,4,3,1);
\draw[fill=lightgray] (0,4) rectangle (1,3);
\draw[fill=lightgray] (1,3) rectangle (2,2);
\draw[ultra thick] (0,4)--(2,2);
\draw[ultra thick] (0,1)--(0,4)--(4,4)--(4,3)--(1,3)--(1,1)--(0,1)--(0,4);
\draw[ultra thick] (3,3)--(3,2)--(1,2);
\end{tikzpicture}\]

We then define the set
\[C_\lambda := \{-b_1 - 1/2, \dots, -b_r - 1/2, a_r + 1/2, \dots, a_1 + 1/2\}\]
and for each integer $k\geq 0,$ define $P_k(\lambda)\in \Z[\frac{1}{2}]$ by
\[P_k(\lambda) := \sum_{c \in C_\lambda} \sgn(c) c^k.\]
Finally, we define the $Q_k(\lambda)\in\Q$ by $Q_0(\lambda) := 1$ and for all $k>0,$
\[Q_k(\lambda) := \frac{P_{k - 1}(\lambda)}{(k - 1)!} + \beta_k,\]
where $\beta_k$ is defined by $\frac{z/2}{\sinh(z/2)} = \sum_{n = 0}^\infty \beta_nz^n$ (or equivalently, $\beta_k = \frac{-B_k(2^{k-1}-1)}{2^{k-1}k!}$ for all $k$).

The aforementioned theorem of Bloch and Okounkov \cite{BlochOkounkov2000} implies that the $q$-bracket (as defined in \ref{Q_Brak_Def}) of $Q_k$ is a quasimodular form for all non-negative integers $k$. We will work with 
\[\calQ_k(\tau) := 2^{k -2}(k - 1)!\left<Q_k\right>_q,\]
which is normalized so that all of the coefficients of the $q$-series besides the constant term are integral.  

\subsection{Definitions of $P_k^{(p)}(\lambda), Q_k^{(p)}(\lambda),$ and $\calQ_k^{(p)}(\tau)$} For primes $p\geq 2,$ we follow Serre and define the regularizations
\begin{align*}
P_k^{(p)}(\lambda) &:= \sum_{\begin{subarray}{c} c \in C_\lambda,\\ \gcd(2c, p) = 1\end{subarray}} \sgn(c) c^k \; \text{ for } k\geq 0,\\
Q_0^{(p)}(\lambda) &:=1-\frac{1}{p}, \text{ and}\\
Q_k^{(p)}(\lambda)  &:=  \frac{P_{k - 1}^{(p)}(\lambda)}{(k - 1)!} + \beta_k^{(p)} \; \text{ for } k>0, \\
\end{align*}
where $\beta_k^{(p)} := \beta_k(1 - p^{k - 1})$. Analogously to our normalization $\calQ_k$ of $\left<Q_k\right>_q$, we define \[\calQ_k^{(p)}(\tau) := 2^{k -2}(k - 1)!\left<Q_k^{(p)}\right>_q.\]  

\begin{rem}
By matching up conjugate partitions, we can see that $\calQ_k(\tau)$ and $\calQ_k^{(p)}(\tau)$ equal zero for odd $k$. 
\end{rem}

\section{Congruences and $p$-adic modular forms} \label{padic_section}
\subsection{Congruences} \label{cong}

Now we will show that our regularizations $\calQ_k^{(p)}(\tau)$ satisfy congruences analogous to those which are known for the Eisenstein series, proving parts (a) and (b) of Theorem~\ref{mainthm}.  We focus on weights $k$ that are not multiples of $p - 1$, as that is when the constant term of $\calQ_k^{(p)}$ is $p$-integral.  Note that this implies that $p \geq 5$.
\begin{thmpart}[part (a) of Theorem 1.1]
Let $k_1, k_2 \not \equiv 0 \pmod{p - 1}$.  Then we have that \[\calQ^{(p)}_{k_1} \equiv \calQ^{(p)}_{k_2} \pmod{p^r}\] whenever $k_1 \equiv k_2 \pmod{\phi(p^r)}$.  
\end{thmpart}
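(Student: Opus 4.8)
The plan is to mimic Serre's treatment of the regularized Eisenstein series $G_k^{(p)}$ recalled in the introduction, reducing the assertion to a coefficient-by-coefficient comparison of the $q$-expansions. Expanding the definition of $Q_k^{(p)}$ and using linearity of the $q$-bracket together with $\qbrack{1}=1$, I would first write
\[\calQ_k^{(p)} = 2^{k-2}(k-1)!\,\qbrack{Q_k^{(p)}} = 2^{k-2}\,\qbrack{P_{k-1}^{(p)}} + 2^{k-2}(k-1)!\,\beta_k^{(p)},\]
so that the $k$-dependence splits into a ``power-sum part'' $2^{k-2}\qbrack{P_{k-1}^{(p)}}$ and the constant term $2^{k-2}(k-1)!\,\beta_k^{(p)}$. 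I will handle these two pieces separately, in each case showing that every $q$-coefficient is $p$-integral and depends on $k$ modulo $p^r$ only through the class of $k$ modulo $\phi(p^r)$.

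For the constant term, a direct simplification using $\beta_k = -B_k(2^{k-1}-1)/(2^{k-1}k!)$ and $\beta_k^{(p)} = \beta_k(1-p^{k-1})$ gives
\[2^{k-2}(k-1)!\,\beta_k^{(p)} = (2^{k-1}-1)\cdot\frac{-B_k(1-p^{k-1})}{2k},\]
so the constant term of $\calQ_k^{(p)}$ is exactly $(2^{k-1}-1)$ times the constant term $-B_k^{(p)}/(2k)$ of Serre's $G_k^{(p)}$. The Kummer congruences (Serre's argument) already give the desired congruence for the latter, while Euler's theorem gives $2^{k_1-1}-1 \equiv 2^{k_2-1}-1 \pmod{p^r}$ whenever $k_1\equiv k_2\pmod{\phi(p^r)}$; since both factors are $p$-integral (here is where $k\not\equiv 0\pmod{p-1}$ is used), their product is congruent modulo $p^r$, settling the constant term.

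For the remaining coefficients I would establish the structural claim that, for $n\geq 1$, the coefficient of $q^n$ in $\calQ_k^{(p)}$ is a $\Z[\tfrac12]$-linear combination, with coefficients independent of $k$, of powers $m^{k-1}$ with $m$ an odd integer and $p\nmid m$. To see this, note that $\qbrack{P_{k-1}^{(p)}} = \bigl(\sum_{\lambda\in\calP} P_{k-1}^{(p)}(\lambda)q^{|\lambda|}\bigr)\cdot\prod_{n\geq 1}(1-q^n)$, and that multiplication by $\prod_{n\geq 1}(1-q^n)$ is a fixed, $k$-independent convolution. Writing $m=2c$ for $c\in C_\lambda$, the restriction $\gcd(2c,p)=1$ in the definition of $P_{k-1}^{(p)}$ is exactly $p\nmid m$, and $2^{k-2}c^{k-1}=\tfrac12 m^{k-1}$; since the content sets $C_\lambda$, and hence the signed multiplicities with which each value $c$ occurs, do not depend on $k$, collecting terms yields the claimed form (a finite sum for each fixed $n$). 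Euler's theorem then gives $m^{k_1-1}\equiv m^{k_2-1}\pmod{p^r}$ for each such $m$ whenever $k_1\equiv k_2\pmod{\phi(p^r)}$, and multiplying by the $k$-independent $p$-integral coefficients and summing gives the congruence term by term.

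The main obstacle is the structural claim for the positive coefficients: one must check that passing from the numerator $\sum_\lambda P_{k-1}^{(p)}(\lambda)q^{|\lambda|}$ to the $q$-bracket (i.e.\ dividing by $\sum_\lambda q^{|\lambda|}$) does not disturb the clean separation of the $k$-dependence into pure powers $m^{k-1}$---this works precisely because the normalizing factor $\prod_{n\geq1}(1-q^n)$ is independent of $k$---and that the $p$-coprimality built into the regularization $P_{k-1}^{(p)}$ is exactly what lets Euler's theorem apply termwise. The rest is the bookkeeping of combining these two ingredients with the integrality observations above.
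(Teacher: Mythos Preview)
Your proposal is correct and follows essentially the same approach as the paper: split $\calQ_k^{(p)}$ into the power-sum part $2^{k-2}\qbrack{P_{k-1}^{(p)}}$ and the constant $2^{k-2}(k-1)!\beta_k^{(p)}$, handle the former with Euler's theorem (using that each $2c$ is an integer prime to $p$) and the latter with the Kummer congruences. The only cosmetic difference is that the paper establishes the congruence partition by partition (showing $2^{k_1-2}(k_1-1)!Q_{k_1}^{(p)}(\lambda)\equiv 2^{k_2-2}(k_2-1)!Q_{k_2}^{(p)}(\lambda)$ for every $\lambda$) and then multiplies by $\eta(\tau)$, whereas you pass to the $q$-expansion first; the content is the same.
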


\begin{proof}
By Euler's Theorem, we get that
\[2^{k_1 - 1} P_{k_1-1}^{(p)}(\lambda) \equiv 2^{k_2 - 1}P_{k_2-1}^{(p)}(\lambda) \pmod{p^r}.\]
Since \[2^{k - 2}(k - 1)!Q_k^{(p)}(\lambda) = 2^{k - 2}P_{k - 1}^{(p)}(\lambda) + 2^{k - 2}(k - 1)! \beta_k^{(p)}\] and \[2^{k - 2}(k - 1)! \beta_k^{(p)} = -\frac{B_k^{(p)}(2^{k - 1} - 1)}{2k},\] the Kummer congruences imply that 
\[2^{k_1 - 2}(k_1 - 1)!Q_{k_1}^{(p)}(\lambda) \equiv 2^{k_2 - 2}(k_2 - 1)!Q_{k_2}^{(p)}(\lambda) \pmod{p^r}.\]  These congruences carry over to \[\calQ_k^{(p)} = \eta(\tau) \sum_{\lambda \in \calP} 2^{k - 2}(k - 1)!Q_k^{(p)}(\lambda)q^{|\lambda|-1/24}.\]
\end{proof}

\subsection{$p$-adic modular forms}
Now, we will use these congruence results to show that our regularizations $\calQ_k^{(p)}$ are $p$-adic modular forms.  As before, we focus on weights $k$ that are not multiples of $p - 1$. First, we define a $p$-adic modular form. 

\begin{df}
We say that $f = \sum a_n q^n \in \Q_p[[q]]$ is a $p$-adic modular form if there exists $f_i \in M_{k_i}$ with rational coefficients which converge uniformly to the coefficients of $f$ in $\Q_p$.  In this situation, we write $f_i \to f$.  
\end{df}

\begin{rem}
If $f_i \to f$, it can be shown that the weights $k_i$ converge in the weight space $X$.  For $p > 2$, we have that \[X = \varprojlim \Z/\phi(p^m)\Z.\] 
\end{rem}

\begin{rem}
Every level $p^n$ modular form is a level 1 $p$-adic modular form of the same weight.  This includes for instance the regularized Eisenstein series $G_k^{(p)}$ for $p \geq 5, k \not \equiv 0 \pmod{p - 1}$.  In this case, we can see that $G_{k + \phi(p^i)} \to G_k^{(p)}$. 
\end{rem}

Note that since $E_2, E_4, E_6$ are $p$-adic modular forms, all quasimodular forms are too.  Thus, since we will show that the $\calQ_k^{(p)}$'s are quasimodular in Section~\ref{jacobi}, we will have that they are $p$-adic modular forms as well.  However, we can also show this using the above congruences, which give them as $p$-adic limits of the $\calQ_k$'s.  

\begin{thmpart}[part (b) of Theorem~\ref{mainthm}]
Let $p \geq 5$ and \mbox{$k \not \equiv 0 \pmod{p - 1}$}.  Then we have that $\calQ_k^{(p)}$ is a $p$-adic modular form of weight $k$, with \[\displaystyle{\lim_{i \to \infty} \calQ_{k + \phi(p^i)} = \calQ_k^{(p)}}.\]
\end{thmpart}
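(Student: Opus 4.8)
The plan is to realize $\calQ_k^{(p)}$ as the coefficientwise $p$-adic limit of the quasimodular forms $\calQ_{k+\phi(p^i)}$. Each $\calQ_{k+\phi(p^i)}$ is a quasimodular form of weight $k+\phi(p^i)$ with rational coefficients by Bloch--Okounkov, hence already a $p$-adic modular form by the remark above (it is a polynomial in $E_2,E_4,E_6$, each of which is $p$-adic modular). Since the space of $p$-adic modular forms is closed under uniform $p$-adic limits of $q$-expansion coefficients (a diagonal argument on the approximating sequences), it then follows that $\calQ_k^{(p)}$ is $p$-adic modular; its weight is the limit of the weights $k+\phi(p^i)$, which equals $k$ in $X=\varprojlim \Z/\phi(p^m)\Z$ because $\phi(p^i)\to 0$ there.

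First I would record two elementary divisibility facts about $j_i:=k+\phi(p^i)$. Since $\phi(p^i)=p^{i-1}(p-1)\equiv 0\pmod{p-1}$, we have $j_i\equiv k\not\equiv 0\pmod{p-1}$, so each $j_i$ lies in the range where both (\ref{congruence_remark}) and part (a) apply and where the constant term is $p$-integral. Moreover $\phi(p^r)\mid\phi(p^i)$ whenever $r\le i$, so that $k\equiv j_i\pmod{\phi(p^r)}$ for every $r\le i$.

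Next I would estimate $\calQ_{j_i}-\calQ_k^{(p)}$ by inserting the intermediate term $\calQ_{j_i}^{(p)}$. For the piece $\calQ_{j_i}-\calQ_{j_i}^{(p)}$, congruence (\ref{congruence_remark}) gives $\calQ_{j_i}\equiv\calQ_{j_i}^{(p)}\pmod{p^{\,j_i-1}}$; as $j_i-1=k+\phi(p^i)-1\to\infty$, this difference tends to $0$ uniformly on coefficients. For the piece $\calQ_{j_i}^{(p)}-\calQ_k^{(p)}$, I would apply part (a) with $k_1=k$, $k_2=j_i$, and $r=i$: since $k\equiv j_i\pmod{\phi(p^i)}$, this yields $\calQ_k^{(p)}\equiv\calQ_{j_i}^{(p)}\pmod{p^i}$, which likewise tends to $0$. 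Adding the two estimates shows $\calQ_{j_i}\to\calQ_k^{(p)}$ in the required sense.

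I expect the only genuine subtlety to be the transition between quasimodular and honestly modular forms forced by the definition: the $\calQ_{j_i}$ are quasimodular rather than modular, so one must actually use that each is already a $p$-adic modular form and that $p$-adic modular forms form a closed set under coefficientwise limits. Granting this, the argument rests entirely on part (a) and (\ref{congruence_remark}), both already in hand, so the remaining work is just the bookkeeping above.
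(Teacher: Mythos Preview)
Your proposal is correct and follows essentially the same route as the paper. The paper's proof is terser---it asserts $\calQ_{k+\phi(p^i)}\equiv\calQ_k^{(p)}\pmod{p^i}$ in one line rather than splitting through the intermediate $\calQ_{k+\phi(p^i)}^{(p)}$, and then invokes the same fact that quasimodular forms are already $p$-adic modular---but your decomposition via (\ref{congruence_remark}) and part (a), together with the check that $j_i\not\equiv 0\pmod{p-1}$, is exactly what underlies that line.
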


\begin{proof}
For $i \geq 1$, we have that \[g_i := \calQ_{k + \phi(p^i)} \equiv \calQ_k^{(p)} \pmod{p^i},\] and so $g_i \to \calQ_k^{(p)}$ $p$-adically.  Since the $g_i$ are quasimodular, they are $p$-adic modular forms, and hence the $\calQ_k^{(p)}$ are too. 
\end{proof}

\begin{rem}
If $k \equiv 0 \pmod{p - 1}$, then $B_k^{(p)}$ is not $p$-integral and hence we do not get congruences for the constant term of $\calQ_k^{(p)}$.  However, just as with the Eisenstein series, we can renormalize so that the constant term is one. Kummer's congruences for the Bernoulli numbers imply that the resulting functions will also converge $p$-adically.  In the special case $k=0,$ the result converges $p$-adically to $1$. 
\end{rem}

\subsection{Filtration}
In addition to studying $p$-adic modular forms, we can also study modulo-$p$ modular forms.  One of the most important properties of modulo-$p$ modular forms are their filtration.  See \cite{Serre1973} for more details. 

\begin{df}
Let $p \geq 5$.  The filtration of $f \in \mathbb{F}_p[[q]]$ is denoted as $w(f)$ and is defined to be the smallest integer $k$ such that $f$ is the modulo $p$ reduction of a modular form of weight $k$ and level 1 with coefficients in $\Q \cap \Z_p$.  
\end{df}

\begin{thmpart}[part (c) of Theorem~\ref{mainthm}]
If $p > k$ then the modulo $p$ filtration of $\calQ_k^{(p)}$ (and $\calQ_k$) is $k(p + 1)/2$.  
\end{thmpart}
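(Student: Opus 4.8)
The plan is to deduce everything from the $E_2$-depth of $\calQ_k$ together with the behaviour of the theta operator $\theta=q\frac{d}{dq}$ on the mod-$p$ filtration. Since $k\ge 2$, the congruence \eqref{congruence_remark} gives $\calQ_k^{(p)}\equiv\calQ_k\pmod p$, so the two series have the same reduction in $\mathbb{F}_p[[q]]$ and hence the same filtration; it suffices to compute $w(\calQ_k)$. For $p>k$ with $p\neq k+1$ one has $(p-1)\nmid k$, so by von Staudt--Clausen the constant term is $p$-integral and, since the remaining coefficients are integral by construction, $\calQ_k\in\mathbb{Z}_{(p)}[[q]]$; the single case $p=k+1$ (where $k\equiv 0\pmod{p-1}$) is handled by first renormalizing the constant term as in the Eisenstein case, which cannot change the filtration because it alters $\calQ_k$ only by a weight-$0$, depth-$0$ constant.

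By the Bloch--Okounkov theorem $\calQ_k$ is quasimodular of weight $k$, so it lies in $\widetilde M_k=\bigoplus_j E_2^{\,j}M_{k-2j}$ and has a unique expansion $\calQ_k=\sum_{j}g_jE_2^{\,j}$ with $g_j\in M_{k-2j}$; the top term occurs at $j=k/2$, with $g_{k/2}=c_k\in\Q$ a constant. Reducing modulo $p$ and using the classical congruence $E_2\equiv E_{p+1}\pmod p$ (which follows from Fermat's little theorem on the divisor sums together with the Kummer congruence $B_{p+1}/(p+1)\equiv B_2/2\pmod p$), I replace each factor of $E_2$ by $E_{p+1}$. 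The $j$th summand then becomes the reduction of a genuine modular form of weight $(k-2j)+j(p+1)=k+j(p-1)$, and these weights are pairwise distinct for the occurring values of $j$, the largest being $k+\tfrac{k}{2}(p-1)=\tfrac{k(p+1)}{2}$.

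I would then read off the filtration from the top term using three standard facts about $w$ (see \cite{Serre1973}): $w(fg)\le w(f)+w(g)$; $w(f+g)=\max\{w(f),w(g)\}$ whenever $w(f)\neq w(g)$; and $w(\theta f)=w(f)+p+1$ when $p\nmid w(f)$. From $\theta E_2=\tfrac1{12}(E_2^2-E_4)$ one gets
\[E_2^{\,m+1}=E_4E_2^{\,m-1}+\tfrac{12}{m}\,\theta\!\left(E_2^{\,m}\right),\]
and since $w\big(E_4E_2^{\,m-1}\big)\le 4+(m-1)(p+1)$ is strictly smaller than $w\big(\theta E_2^{\,m}\big)=(m+1)(p+1)$ for $p\ge 5$, an induction yields $w\big(E_{p+1}^{\,m}\big)=m(p+1)$ for all $1\le m\le k/2$. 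Here the hypothesis $p>k$ is used to guarantee $p\nmid m$ throughout the induction (so each $\theta$ raises the filtration by exactly $p+1$) and to keep the auxiliary weights small enough to avoid any collapse. Each summand with $j<k/2$ then has filtration at most $k+j(p-1)<\tfrac{k(p+1)}{2}$, while the top summand $c_kE_{p+1}^{\,k/2}$ has filtration exactly $\tfrac{k(p+1)}{2}$ provided $c_k\not\equiv 0\pmod p$; the max-principle then gives $w(\calQ_k)=\tfrac{k(p+1)}{2}$.

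The main obstacle is exactly this last proviso: that $\calQ_k$ has full depth $k/2$ modulo $p$, i.e.\ $c_k\not\equiv0\pmod p$ for $p>k$. I would establish it by computing the leading coefficient $c_k$ directly from the $E_2$-expansion of $\calQ_k=2^{k-2}\qbrack{P_{k-1}}+\text{const}$; for instance $c_2=-\tfrac1{24}$, $c_4=\tfrac1{48}$, and $c_6=-\tfrac{5}{216}$. These values exhibit the key phenomenon: the numerator and denominator of $c_k$ are supported on primes $\le k$ (note the factor $5$ appearing in $c_6$, which forces exactly $p>6$), so that $c_k$ is a $p$-adic unit precisely in the range $p>k$. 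Proving the uniform bound that no prime exceeding $k$ divides the numerator of $c_k$ is the delicate point; one could instead extract the nonvanishing of $c_k$ from Zagier's explicit generating function for the $\qbrack{Q_k}$, at the cost of importing the Jacobi-form machinery developed in Section~\ref{jacobi}.
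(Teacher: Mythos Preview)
Your overall architecture matches the paper's proof exactly: reduce to $\calQ_k$ via the congruence $\calQ_k\equiv\calQ_k^{(p)}\pmod p$, expand $\calQ_k$ as a polynomial in $E_2$ (equivalently $G_2$), observe that the $j$th summand has filtration at most $k+j(p-1)$, and conclude that the filtration equals $k(p+1)/2$ provided the top coefficient $c_{k/2}$ is a $p$-adic unit. Your inductive computation of $w(E_2^{\,m})$ via $\theta$ is correct but more elaborate than needed; the paper simply quotes $w(G_2^{\,i})=i(p+1)$ as a standard fact.

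The genuine gap is precisely the point you flag as ``delicate'': you never establish that $c_{k/2}\not\equiv 0\pmod p$ for $p>k$, and your evidence from $k=2,4,6$ is not a proof. The paper closes this gap cleanly, and without any Jacobi-form machinery: Theorem~2 of \cite{Zagier2015} gives the leading term of $\calQ_k$ explicitly as
\[
\calQ_k=\frac{(k-1)!!\,8^{k/2-1}}{k/2}\,G_2^{k/2}+(\text{lower degree in }G_2),
\]
and since $(k-1)!!=1\cdot 3\cdots(k-1)$, $8^{k/2-1}$, and $k/2$ are all supported on primes $\le k$, the coefficient is automatically a $p$-adic unit once $p>k$. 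So the missing ingredient is just this citation, not a new argument.

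One further wrinkle: your handling of the boundary case $p=k+1$ is not sound as written. When $(p-1)\mid k$ the constant term $-B_k(2^{k-1}-1)/(2k)$ of $\calQ_k$ genuinely fails to be $p$-integral (e.g.\ $k=4$, $p=5$), so ``renormalizing by a constant'' is not a mod-$p$ operation and cannot be said to leave the filtration unchanged. Either exclude this case or argue more carefully (e.g.\ by working with $\calQ_k-c$ for an integral constant $c$ and tracking the effect on each graded piece).
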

\begin{proof}
First, note that $\calQ_k \equiv \calQ_k^{(p)} \pmod{p}$ since they only differ modulo higher powers of $p$.  Thus they must have the same filtration.

We may write $\calQ_k$ as a polynomial of degree $k/2$ in $G_2,$ and Theorem 2 of \cite{Zagier2015} gives us the leading coefficient: \[\calQ_{k}=\frac{(k - 1)!!\; 8^{k/2 - 1}}{k/2} G_2^{k/2} + \text{lower degree terms in $G_2$},\] where $(k-1)!! = 1\times 3 \times \cdots \times (k-1)$. Note that since $p>k,$ the leading coefficient $\frac{(k - 1)!!\; 8^{k/2 - 1}}{k/2}$ must be nonzero modulo $p$.  Also, it is well-known that $w(G_2^i) = iw(G_2) = i(p + 1)$ for all $i \geq 1$.  Thus the filtration of $\frac{(k - 1)!!\; 8^{k/2 - 1}}{k/2} G_2^{k/2}$ is $k(p+1)/2,$ and the filtrations of the ``lower degree terms'' are strictly smaller. It follows that the filtration of $\calQ_k$ is $k(p+1)/2$, as desired.
\end{proof}

\section{Jacobi forms and the quasimodularity of $\calQ_k^{(p)}$} \label{jacobi}
\newcommand{\SL}{\mathrm{SL}}
\newcommand{\im}{\mathrm{im}}

In this section we will show that $\calQ_k^{(p)}(\tau)$ is quasimodular for every prime $p$ and non-negative integer $k$. We define certain auxiliary functions $F(z,\tau)$ and $F^{(p)}(z,\tau)$ which are generating functions for the $q$-brackets $\calQ_k(\tau)$ and $\calQ_k^{(p)}(\tau)$, and we show these functions are Jacobi forms. We then make use of the theory of Jacobi forms to prove our quasimodularity result.

\subsection{Definitions of $F(z,\tau)$ and $F^{(p)}(z,\tau)$}
In order to better understand $\calQ_k^{(p)}(\tau)$, we first define the function $F:\C\times \H \rightarrow \C$ and its regularization $F^{(p)}$ by
\begin{eqnarray}
F(z,\tau) &:=& \frac{1}{2}\sum_{k\geq 0}\left<Q_k\right>_q(4\pi i z)^{k-1} \; \text{ and}\label{deff}\\
F^{(p)}(z,\tau) &:=& \frac{1}{2}\sum_{k\geq 0}\left<Q_k^{(p)}\right>_q(4\pi i z)^{k-1}, \label{deffp}
\end{eqnarray}
where we set $q=e^{2\pi i\tau}$ and $\zeta=e^{2\pi iz}.$

We can describe the function $F^{(p)}(z,\tau)$ in terms of $F(z,\tau)$ as follows. 

\begin{pr}\label{pr2.1}
Using the notation given above, we have that \[F^{(p)}(z,\tau) = F(z,\tau) - \frac{1}{p}\sum_{j=0}^{p-1}F(z+j/p,\tau).\]
\end{pr}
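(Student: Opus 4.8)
The plan is to reduce the claimed identity to a single identity in $z$ that holds termwise for each partition $\lambda \in \calP$, and then to sum over $\lambda$. Writing $\Pi(\tau) = \sum_{\lambda \in \calP} q^{|\lambda|}$ and abbreviating $w = 4\pi i z$, I first record the closed form of the inner generating function $g_\lambda(w) := \sum_{k \geq 0} Q_k(\lambda) w^{k-1}$. Separating the constant $\beta_k$ from $P_{k-1}(\lambda)/(k-1)!$, and using $\sum_{m \geq 0} P_m(\lambda) w^m/m! = \sum_{c \in C_\lambda} \sgn(c) e^{cw}$ together with the defining series $\sum_{k \geq 0}\beta_k w^k = (w/2)/\sinh(w/2)$ (with $\beta_0 = 1$ absorbing the $k=0$ term $Q_0 = 1$), I obtain
\[
g_\lambda(w) = \frac{1}{2\sinh(w/2)} + \sum_{c \in C_\lambda} \sgn(c) e^{cw} = \frac{1}{\zeta - \zeta^{-1}} + \sum_{c \in C_\lambda} \sgn(c)\zeta^{2c},
\]
since $e^{cw} = \zeta^{2c}$. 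The analogous computation for $Q_k^{(p)}$, using $\beta_k^{(p)} = \beta_k(1 - p^{k-1})$ (which also produces the value $Q_0^{(p)} = 1 - 1/p$), yields $g_\lambda^{(p)}(w) = \frac{1}{2\sinh(w/2)} - \frac{1}{2\sinh(pw/2)} + \sum_{c \in C_\lambda,\ \gcd(2c,p)=1} \sgn(c)\zeta^{2c}$. Because $F = \frac{1}{2\Pi}\sum_\lambda q^{|\lambda|} g_\lambda(w)$ and likewise for $F^{(p)}$, and because the shift $z \mapsto z + j/p$ commutes with the sum over $\lambda$, it suffices to prove the per-partition identity $g_\lambda^{(p)}(w) = g_\lambda(w) - \frac{1}{p}\sum_{j=0}^{p-1} g_\lambda(w + 4\pi i j/p)$.

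I would then treat the two pieces of $g_\lambda$ separately, both via orthogonality of $p$-th roots of unity. For the exponential (``$P$'') piece, the shift $z \mapsto z + j/p$ sends $\zeta^{2c} \mapsto \zeta^{2c} e^{2\pi i (2c) j/p}$; since $2c \in \Z$ (as $C_\lambda \subset \Z + \tfrac12$), averaging over $j$ gives $\frac{1}{p}\sum_{j=0}^{p-1} e^{2\pi i (2c)j/p} = 1$ when $p \mid 2c$ and $0$ otherwise. Hence $g_\lambda - \frac{1}{p}\sum_j(\cdot)$ annihilates exactly the terms with $p \mid 2c$ and retains those with $\gcd(2c,p)=1$, reproducing the restricted sum defining $g_\lambda^{(p)}$.

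For the $\sinh$ (``$\beta$'') piece, the same mechanism reduces matters to the identity $\frac{1}{p}\sum_{j=0}^{p-1}\frac{1}{\sinh(u + 2\pi i j/p)} = \frac{1}{\sinh(pu)}$ with $u = w/2$, which I would verify by expanding $\frac{1}{\sinh u} = 2\sum_{m \geq 0} e^{-(2m+1)u}$ and again invoking root-of-unity orthogonality: only the exponents $2m+1 \equiv 0 \pmod p$ survive the averaged sum $\frac{1}{p}\sum_j$, and, writing $2m+1 = p(2\ell+1)$, these repackage precisely into $2\sum_{\ell \geq 0} e^{-(2\ell+1)(pu)} = 1/\sinh(pu)$. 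This matches the $-\frac{1}{2\sinh(pw/2)}$ correction coming from the factor $(1 - p^{k-1})$ in $\beta_k^{(p)}$. Assembling the two pieces gives the per-partition identity, and summing against $\frac{1}{2\Pi} q^{|\lambda|}$ finishes the proof. The only real subtlety is bookkeeping: one must keep the interchange of the $k$-, $\lambda$-, and $c$-sums and the geometric expansion of $1/\sinh$ legitimate, which is where the prefactors and the integrality of $2c$ must be tracked carefully. The two applications of root-of-unity orthogonality are the conceptual heart, and I expect the one genuine computation — the $\sinh$ identity — to be the main (though modest) obstacle.
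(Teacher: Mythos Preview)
Your proposal is correct and follows essentially the same approach as the paper: both arguments first establish the closed form $F(z,\tau)=\frac{1}{2(\zeta-\zeta^{-1})}+\frac{\eta(\tau)}{2}\sum_\lambda\sum_{c\in C_\lambda}\sgn(c)\zeta^{2c}q^{|\lambda|-1/24}$ (and its regularized analogue), and then observe that averaging over the shifts $z\mapsto z+j/p$ picks out exactly the $\zeta$-exponents divisible by $p$. Your write-up simply organizes this per partition via $g_\lambda$ and spells out the ``direct calculation'' the paper leaves implicit, including the identity $\tfrac{1}{p}\sum_{j=0}^{p-1}\frac{1}{\sinh(u+2\pi i j/p)}=\frac{1}{\sinh(pu)}$, so the two proofs differ only in packaging.
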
 
\begin{proof}
First we claim that
\begin{eqnarray}
F(z; \tau) &=&\frac{1}{2(\zeta-\zeta^{-1})}+\frac{\eta(\tau)}{2}\sum_{\lambda \in \calP}\sum_{c\in C_\lambda}\sgn(c)\zeta^{2c}q^{|\lambda|-1/24} \; \text{ and} \label{pr2.1claim1}\\
F^{(p)}(z; \tau) &=& \frac{1}{2}\left(\frac{1}{\zeta-\zeta^{-1}}-\frac{1}{\zeta^p-\zeta^{-p}}\right) \notag \\ & &+\frac{\eta(\tau)}{2}\sum_{\lambda \in \calP}\sum_{\begin{subarray}{c} c \in C_\lambda,\\ \gcd(2c, p) = 1  \end{subarray}}\sgn(c)\zeta^{2c}q^{|\lambda|-1/24}. \label{pr2.1claim2}
\end{eqnarray}

\noindent Equation \eqref{pr2.1claim1} follows by differentiating and evaluating at $z=0;$ for all integers $k\geq 1$ we have 
\begin{align*}
\left(\frac{d}{dz}\right)^{k-1}\left(\frac{\eta(\tau)}{2}\sum_{\lambda \in \calP}\right. & \left.\left.\sum_{c\in C_\lambda}\sgn(c)\zeta^{2c}q^{|\lambda|-1/24}\right)\right|_{z=0}\\
&= \frac{\eta(\tau)}{2}\sum_{\lambda \in \calP}\sum_{c\in C_\lambda}\sgn(c)(4\pi ic)^{k-1}q^{|\lambda|-1/24}\\
&= \frac{\eta(\tau)}{2}\sum_{\lambda \in \calP}(4\pi i)^{k-1}P_{k-1}(\lambda)q^{|\lambda|-1/24}.
\end{align*}
Using the equation $\frac{4\pi iz}{\zeta-\zeta^{-1}} = \sum_{n=0}^\infty\beta_n(4\pi iz)^n,$ we see that
\begin{align*}
\left.\left(\frac{d}{dz}\right)^{k-1} \left(\frac{1}{2(\zeta-\zeta^{-1})} -\frac{1}{8\pi i z}\right)\right|_{z=0}
&= \left.\left(\frac{d}{dz}\right)^{k-1} \left(\frac{1}{8\pi i z}\sum_{n=0}^\infty\beta_n(4\pi iz)^n -\frac{1}{8\pi i z}\right)\right|_{z=0}\\
&= \frac{1}{2}(k-1)!(4\pi i)^{k-1}\beta_{k}.
\end{align*}

\noindent Together these imply that 
\begin{align*}
\frac{1}{2(\zeta-\zeta^{-1})}+\frac{\eta(\tau)}{2}&\sum_{\lambda \in \calP}\sum_{c\in C_\lambda}\sgn(c)\zeta^{2c}q^{|\lambda|-1/24}\\
&=  \frac{1}{2}\sum_{k\geq 0}\left<Q_k\right>_q(4\pi i z)^{k-1}\\
&=F(z,\tau).
\end{align*}

\noindent Equation \eqref{pr2.1claim2} holds by a similar argument.

A direct calculation using \eqref{pr2.1claim1} shows that
\begin{align*}
F(z,\tau) - \frac{1}{p}\sum_{j=0}^{p-1}F(z+j/p,\tau) &= \frac{1}{2}\left(\frac{1}{\zeta-\zeta^{-1}}-\frac{1}{\zeta^p-\zeta^{-p}}\right)\\ &+ \frac{\eta(\tau)}{2}\sum_{\lambda \in \calP}\sum_{\begin{subarray}{c} c \in C_\lambda,\\ \gcd(2c, p) = 1  \end{subarray}}\sgn(c)\zeta^{2c}q^{|\lambda|-1/24}\\
&=F^{(p)}(z; \tau)
\end{align*}
\end{proof}

\subsection{Jacobi forms}
It turns out that $F(z;\tau)$ and $F^{(p)}(z;\tau)$ are meromorphic Jacobi forms. In order to define Jacobi forms, we begin by defining the Petersson slash operator for Jacobi forms. Given a matrix $\gamma=\begin{pmatrix}a&b\\c&d\end{pmatrix}\in \SL_2(\Z)$ and a function $\phi(z;\tau):\C\times \H\to \C,$ define the weight $k$ index $m$ slash operator by

\[\left(\phi|_{k,m}\gamma \right) (z;\tau):=(c\tau+d)^{-k} e\left(\frac{-cmz^2}{c\tau+d}\right)\phi\left(\frac{z}{c\tau+d};\frac{a\tau+b}{c\tau+d}\right),\]
where $e(\alpha) := e^{2\pi i\alpha}$.

Jacobi forms are invariant under the action of matrices with respect to the slash operator and elliptic transformations.
\begin{df}\label{JacobiDef}
If $k$ and $m$ are integers, a holomorphic Jacobi form of weight $k$ and index $m$ for some subgroup $\Gamma$ of $\SL_2(\Z)$ is a holomorphic function 
\[\phi(z;\tau):\C\times\H\to \C\]
which satisfies the following properties:
\begin{enumerate}
\item for every $\gamma \in \Gamma,$
\[\left(\phi|_{k,m}\gamma \right)(z; t)=\phi(z;\tau),\]
\item for every pair of integers $a$ and $b$, 
\[\phi\left(z+a\tau+b;\tau\right)=e(-m(a^2\tau+2az))\phi(z;\tau),\]
\item and the function $\phi(z;\tau)$ has a Fourier expansion of the form 
\[\phi(z;\tau)=\sum_{\substack{n,r\in \Z\\n\geq \frac{r^2}{4m}}}c(n,r)q^n\zeta^r.\]
\end{enumerate}
\end{df}
We refer to the variable $z$ in the definition above as the {elliptic} variable and to $\tau$ as the {modular} variable.

A meromorphic Jacobi form is a function which satisfies properties (1), (2), and (3) in the definition above, but is required only to be meromorphic in the elliptic variable and weakly holomorphic in the modular variable--that is for fixed $z$, the function $\tau\mapsto \phi(z;\tau)$ is holomorphic on $\H$ and meromorphic at the cusps of $\H/\Gamma$.
 For more details on Jacobi forms, see \cite{EichlerZagier1985}.
\begin{pr}
The functions $F(z;\tau)$ and $F^{(p)}(z;\tau)$ are meromorphic Jacobi forms of weight $1$ and index $-2$ for $\SL_2(\Z)$ and $\Gamma_0(p^2)$ respectively, with simple poles at the points $z\in \frac{1}{2}\Z \oplus\frac{\tau}{2}\Z$, and $z\in \frac{1}{2p}\Z \oplus\frac{\tau}{2}\Z$ respectively.
\end{pr}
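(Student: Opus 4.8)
The plan is to establish the result for $F$ by identifying it with an explicit theta quotient, and then to deduce everything about $F^{(p)}$ from its expression in terms of $F$ given in Proposition~\ref{pr2.1}.

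For $F(z;\tau)$ I would first feed the Bloch--Okounkov one-point formula \eqref{BOThm6.5} into \eqref{pr2.1claim1} to evaluate the partition sum in closed form. Comparing the principal part $\frac{1}{2(\zeta-\zeta^{-1})} = \frac{1}{4i\sin(2\pi z)}$, which near $z=0$ is the simple pole $\frac{1}{8\pi i z}$, with the Laurent expansion of the odd Jacobi theta function then identifies $F(z;\tau) = C\,\eta(\tau)^3/\vartheta(2z;\tau)$ for an explicit constant $C$, where $\vartheta$ has weight $\tfrac12$, index $\tfrac12$, and simple zeros exactly on the lattice $\Z\tau\oplus\Z$. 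From this all three defining properties are transparent. Since $\eta^3$ contributes weight $\tfrac32$ and index $0$ while $\vartheta(2z;\tau)^{-1}$ contributes weight $-\tfrac12$ and index $-4\cdot\tfrac12=-2$, the quotient has weight $1$ and index $-2$; the eta and theta multiplier systems cancel (as recorded by the proportionality $\vartheta'(0;\tau)\propto\eta(\tau)^3$, or equivalently because Bloch--Okounkov guarantee the $\qbrack{Q_k}$ are genuine level-one quasimodular forms of weight $k$), so $F$ is invariant under the weight $1$, index $-2$ slash operator for all of $\SL_2(\Z)$; the poles are the zeros of $\vartheta(2z;\tau)$, i.e.\ simple poles at $2z\in\Z\tau\oplus\Z$, which is precisely $z\in\frac12\Z\oplus\frac{\tau}{2}\Z$; and meromorphy in $z$ together with weak holomorphy in $\tau$ are immediate from the $q$-expansion of the quotient.

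For $F^{(p)}$ I would start from Proposition~\ref{pr2.1}. Writing $F=\sum_{n,r}c(n,r)q^n\zeta^r$, the averaging identity exhibits $\frac1p\sum_{j=0}^{p-1}F(z+j/p;\tau)$ as the projection of $F$ onto the Fourier modes with $p\mid r$, so $F^{(p)}$ is the complementary projection keeping exactly the modes with $p\nmid r$, in agreement with \eqref{pr2.1claim2}. The pole statement is then immediate: $F(z+j/p;\tau)$ has simple poles on $\frac12\Z\oplus\frac\tau2\Z$ shifted by $-j/p$, these loci are pairwise disjoint for $0\le j<p$, and their union is $\frac{1}{2p}\Z\oplus\frac\tau2\Z$; no cancellation occurs since each residue is scaled by a nonzero factor, so all poles remain simple. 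For the modular transformation I would slash $g(z;\tau):=\sum_{j=0}^{p-1}F(z+j/p;\tau)$ by $\gamma=\begin{pmatrix}a&b\\c&d\end{pmatrix}\in\Gamma_0(p^2)$. Applying the weight $1$, index $-2$ modularity of $F$ to each summand puts the $j$-th term at argument $w_j=z+\frac{jd}{p}+\frac{jc}{p}\tau$; since $p^2\mid c$ the $\tau$-shift $\frac{jc}{p}$ lies in $p\Z$, so $F$'s elliptic transformation reduces $F(w_j;\tau)$ to $F\!\left(z+\tfrac{jd}{p};\tau\right)$. The decisive point is that the slash's index factor $e\!\left(\frac{2cz^2}{c\tau+d}\right)$, the automorphy factor $e\!\left(\frac{-2cw_j^2}{c\tau+d}\right)$, and this elliptic factor combine to the single root of unity $e\!\left(2\cdot\frac{jc}{p}\cdot\frac{jd}{p}\right)=e\!\left(\frac{2cd\,j^2}{p^2}\right)$, which equals $1$ precisely because $p^2\mid c$. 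As $d$ is a unit modulo $p$, the map $j\mapsto jd \bmod p$ permutes the torsion translates and the sum reassembles, giving $g|_{1,-2}\gamma=g$; hence $F^{(p)}=F-\frac1p g$ is $\Gamma_0(p^2)$-invariant of weight $1$ and index $-2$.

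I expect the main obstacle to be exactly this factor combination: one must carry out the simultaneous reduction of the three exponential contributions and verify that their product is the $j$-independent quantity $e(2cd\,j^2/p^2)=1$. This is what forces the level up to $\Gamma_0(p^2)$ rather than $\Gamma_0(p)$, since with only $p\mid c$ the combined factor need not be trivial for $p\nmid j$. A second, more definitional, subtlety concerns the elliptic transformation, Definition~\ref{JacobiDef}(2): because the sieve retains the condition $p\nmid r$, which is not preserved by the index $-2$ shift $r\mapsto r\pm4$, the clean index $-2$ elliptic law for $F^{(p)}$ holds only along the sublattice $\Z\oplus p\Z\tau$ (equivalently, $z\mapsto z+a\tau$ respects the sieve exactly when $p\mid a$). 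This is the translation lattice naturally attached to level $\Gamma_0(p^2)$ here, and it is compatible with both the pole lattice $\frac{1}{2p}\Z\oplus\frac\tau2\Z$ and the index $-2$; once the closed form for $F$ and the projection description of $F^{(p)}$ are in hand, the identification of weight, index, and poles is routine by comparison.
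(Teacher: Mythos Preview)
Your approach is essentially the paper's own: identify $F$ with the explicit quotient $\eta^3/(2\theta_1(2z;\tau))$ via \eqref{BOThm6.5} to read off weight, index, and poles, and then for $F^{(p)}$ use Proposition~\ref{pr2.1} together with the modular law for $F$ to see that $\gamma\in\Gamma_0(p^2)$ permutes the translates $F(z+j/p;\tau)$ up to the single factor $e(2cd\,j^2/p^2)=1$. Your factor bookkeeping and your pole analysis match the paper's computation line for line; the one thing you add beyond the paper is the observation that the full elliptic law of Definition~\ref{JacobiDef}(2) for $F^{(p)}$ only holds along the sublattice $\Z\oplus p\Z\tau$, a point the paper does not raise (and does not need, since only the modular transformation is used downstream).
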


\begin{proof}
In Section 6 of \cite{BlochOkounkov2000}, Bloch and Okounkov prove that \begin{equation} \label{BOThm6.5}
F(z, \tau) = \frac{(q)_\infty^{2}}{(\zeta-\zeta^{-1})(q\zeta^2)_\infty(q\zeta^{-2})_\infty}=\frac{\eta(q)^3}{2\theta_1(2z;q)},
\end{equation}
where $\theta_1(z;\tau)$ is the standard theta function 
\[\theta_1(z;\tau) := \sum_{n \in \Z}(-1)^nq^{(n+1/2)^2/2}\zeta^{n+1/2}.\]

Although the definition given above only allows for Jacobi forms of integer weight and index, it can be modified to allow Jacobi forms of half integer weight and index, although care must be taken with the square root.
The function $\theta_1$ defined above is a holomorphic function with simple zeros at the points $z\in \Z \oplus\tau \Z$, and transforms like a Jacobi form of weight $1/2$ and index $1/2$ for $\SL_2(\Z)$ with a multiplier. In particular we have that 

\begin{align*}
\theta_1(z/\tau;-1/\tau)=-i(-i\tau)^{1/2}e\left(\frac{z^2}{2\tau}\right) \theta_1(z;\tau).
\end{align*}

Taking into account the change of variable, these facts together imply the proposition for $F(z;\tau)$.

To prove the proposition for $F^{(p)}(z;\tau),$ we use Proposition \ref{pr2.1} and consider the individual components $F(z+j/p;\tau)$. Using the Jacobi form transformation laws we find that if $\begin{pmatrix}a&b\\c&d\end{pmatrix}\in \Gamma_0(p^2),$ then 

\begin{align*}
  F&\left(\frac{z}{c\tau+d}+j/p;\frac{a\tau+b}{c\tau+d}\right)\\
& \ \ =(c\tau+d) \ e\left(-2c\frac{\left(z+j(c\tau+d)/p\right)^2}{c\tau+d}\right) \ F\left(z+\frac{jc}{p}\tau+\frac{jd}{p};\tau\right)\\
& \ \ =(c\tau+d) \ e\left(-2c\frac{z^2}{c\tau+d}-2\frac{j^2cd}{p^2}\right) \ F\left(z+\frac{jd}{p};\tau\right).
\end{align*}
Since $p^2$ divides $c$, the action of the matrix simply permutes these terms, leaving only the expected automorphy factor, $(c\tau+d) \ e\left(-2c\frac{z^2}{c\tau+d}\right).$

\end{proof}

\subsection{Showing quasimodularity}
In this section, we will prove the following. 
\begin{thmpart}[part (d) of Theorem~\ref{mainthm}]
Let $p \geq 5$ be prime.  Then we have that $\calQ_k^{(p)} \in \widetilde{M}_k(p^2)$. 
\end{thmpart}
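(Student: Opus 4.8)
The plan is to deduce the quasimodularity of $\calQ_k^{(p)}$ directly from the fact, established in the previous proposition, that the generating function $F^{(p)}(z;\tau)$ is a meromorphic Jacobi form of weight $1$ and index $-2$ for $\Gamma_0(p^2)$. The guiding principle is the standard one in the theory of Jacobi forms, which is also the mechanism underlying the original Bloch--Okounkov argument: the coefficients in the expansion of a Jacobi form about the elliptic variable $z$ are quasimodular forms, with the index producing precisely the non-modular correction terms. Since by definition
\[
F^{(p)}(z;\tau) = \frac{1}{2}\sum_{k \geq 0}\qbrack{Q_k^{(p)}}(4\pi i z)^{k-1},
\]
the coefficient of $z^{k-1}$ in the Laurent expansion of $F^{(p)}$ at $z=0$ is the fixed nonzero constant $\tfrac12(4\pi i)^{k-1}$ times $\qbrack{Q_k^{(p)}}$, so it suffices to show that these Laurent coefficients are quasimodular of the appropriate weight on $\Gamma_0(p^2)$.

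To carry this out, I would write $F^{(p)}(z;\tau) = \sum_{\nu \geq -1}\chi_\nu(\tau) z^\nu$, where the simple pole contributes the $z^{-1}$ term. Unwinding the slash-operator invariance $\bigl(F^{(p)}|_{1,-2}\gamma\bigr) = F^{(p)}$ for $\gamma = \begin{pmatrix} a & b \\ c & d\end{pmatrix} \in \Gamma_0(p^2)$ gives
\[
F^{(p)}\!\left(\frac{z}{c\tau+d};\frac{a\tau+b}{c\tau+d}\right) = (c\tau+d)\, e\!\left(\frac{-2cz^2}{c\tau+d}\right) F^{(p)}(z;\tau).
\]
Expanding both sides as Laurent series in $z$, using $e\bigl(\tfrac{-2cz^2}{c\tau+d}\bigr) = \sum_{j \geq 0}\tfrac{1}{j!}\bigl(\tfrac{-4\pi i c}{c\tau+d}\bigr)^{j} z^{2j}$, and comparing coefficients of $z^\nu$ yields
\[
\chi_\nu(\gamma\tau) = (c\tau+d)^{\nu+1}\chi_\nu(\tau) + \sum_{j \geq 1}\frac{(-4\pi i)^j}{j!}\,c^j (c\tau+d)^{\nu+1-j}\,\chi_{\nu-2j}(\tau),
\]
which is exactly the transformation law of a quasimodular form of weight $\nu + 1$ on $\Gamma_0(p^2)$, the error terms being the expected powers of $c/(c\tau+d)$. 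Taking $\nu = k-1$ shows that $\chi_{k-1}$, hence $\qbrack{Q_k^{(p)}}$ and its scalar multiple $\calQ_k^{(p)} = 2^{k-2}(k-1)!\qbrack{Q_k^{(p)}}$, is quasimodular of weight $k$ on $\Gamma_0(p^2)$.

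The pole at $z=0$ is not an obstacle to this comparison, since the index factor $e\bigl(\tfrac{-2cz^2}{c\tau+d}\bigr)$ is entire in $z$ and the sums over $j$ are finite; the lowest coefficient $\chi_{-1}$ is the constant $\tfrac{1}{2}(1 - 1/p)$ and satisfies the relation trivially. The step I expect to be the main obstacle is upgrading ``transforms correctly under $\Gamma_0(p^2)$'' to genuine membership in the space $\widetilde{M}_k(p^2)$ of \emph{holomorphic} quasimodular forms: because $F^{(p)}$ is only weakly holomorphic in $\tau$ (meromorphic at the cusps), the coefficient comparison alone produces quasimodular forms that are a priori only weakly holomorphic, so one must separately control the behavior of $\calQ_k^{(p)}$ at every cusp of $\Gamma_0(p^2)$. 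I would handle this by noting that its $q$-expansion is a genuine power series in $q$ and that $F^{(p)}$ is assembled from the holomorphic objects $\eta(\tau)^3/\theta_1$ via Proposition~\ref{pr2.1}, which bounds the order of growth at the remaining cusps and forces holomorphy there.
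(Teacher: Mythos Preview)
Your argument is correct and reaches the same conclusion, but by a somewhat different route than the paper.

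The paper does not compare Laurent coefficients directly. Instead it applies the Jacobi raising operator $Y_{-2}=\frac{1}{2\pi i}\frac{\partial}{\partial z}-4\frac{\im z}{\im \tau}$ repeatedly to $F^{(p)}$, takes a carefully defined residue at $z=0$ (Proposition~\ref{residue}) to obtain a function $\widetilde\calQ_k^{(p)}(\tau)$ that transforms like a genuine weight-$k$ modular form on $\Gamma_0(p^2)$, and then observes that $\widetilde\calQ_k^{(p)}$ is a polynomial in $1/\im\tau$ whose constant term is a scalar multiple of $\calQ_k^{(p)}$. In other words, the paper realizes $\calQ_k^{(p)}$ as the holomorphic part of an almost-holomorphic modular form, whereas you read off the quasimodular transformation law for the Laurent coefficients $\chi_\nu$ directly from the index factor $e(-2cz^2/(c\tau+d))$. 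These are the two standard, equivalent ways of extracting quasimodularity from a Jacobi form; your version is more elementary and avoids the need for the special residue in Proposition~\ref{residue}, while the paper's version has the side benefit of producing the almost-holomorphic completion explicitly.

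Two small remarks. First, your stated value $\chi_{-1}=\tfrac12(1-1/p)$ is off by the factor $(4\pi i)^{-1}$; this is harmless since you only use that $\chi_{-1}$ is constant. Second, on the cusp-holomorphy issue you flag at the end: the paper is in fact no more explicit than you are on this point, so your awareness of it is already at least as careful as the published argument.
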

We have defined the functions $F$ and $F^{(p)}$ so that the $q$-brackets and regularized $q$-brackets arise from derivatives of these functions. Although derivatives of Jacobi forms are not generally Jacobi forms themselves, there is a certain differential operator $Y_{m}$ which preserves the modularity properties, but sacrifices holomorphicity. Let $Y_{m}$ be the Jacobi raising operator defined as follows (see \cite[pg. 43]{BerndtSchmidt}, or \cite[Def. 2.5]{CR}):
\[Y_m(\phi) := \frac{1}{2\pi i}\frac{\partial \phi}{\partial z}+2m\frac{\text{Im } z}{\text{Im }\tau}\phi\] for all meromorphic $\phi(z;\tau):\C\times \H\to \C.$ 
\begin{pr}[Berndt, Schmidt]
The operator $Y_{m}$ commutes with the action of the slash operator but increases the weight by $1$, so that if \newline$\gamma=\begin{pmatrix}a&b\\c&d\end{pmatrix}\in \SL_2(\Z),$ and $\phi(z;\tau)$ is any real-analytic function $\phi(z;\tau):\C\times \H\to \C,$ then 
\[Y_{m} \left(\phi|_{k,m}\gamma \right)  = \left(Y_{m} \phi\right)|_{k+1,m}\gamma.\]
Therefore, if $\phi$ transforms like a Jacobi form of weight $k$ and index $m$, then $Y_{m} (\phi)$ transforms like a Jacobi form of weight $k+1$ and index $m$.
\end{pr}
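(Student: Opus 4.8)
The plan is to establish the intertwining identity $Y_m(\phi|_{k,m}\gamma) = (Y_m\phi)|_{k+1,m}\gamma$ by a direct computation, and then read off the Jacobi-form statement as a corollary. Throughout I would write $\gamma = \begin{pmatrix}a&b\\c&d\end{pmatrix}$, set $j = c\tau+d$, $z' = z/j$, and $\tau' = (a\tau+b)/(c\tau+d)$, so that $(\phi|_{k,m}\gamma)(z;\tau) = j^{-k}\, e(-cmz^2/j)\,\phi(z';\tau')$, where $e(\alpha)=e^{2\pi i\alpha}$.

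First I would apply $\tfrac{1}{2\pi i}\partial_z$ to $\phi|_{k,m}\gamma$. By the product and chain rules, differentiating the index-$m$ automorphy factor $e(-cmz^2/j)$ produces a term proportional to $-2cmz/j$ times $\phi(z';\tau')$, while differentiating $\phi(z';\tau')$ produces $\tfrac1j(\partial_z\phi)(z';\tau')$ since $\partial_z z' = 1/j$. Separately, I would record the non-holomorphic piece $2m\tfrac{\text{Im }z}{\text{Im }\tau}(\phi|_{k,m}\gamma)$. Adding these and factoring out $j^{-k-1}e(-cmz^2/j)$, the term $\tfrac{1}{2\pi i}(\partial_z\phi)(z';\tau')$ already matches the holomorphic part of $(Y_m\phi)|_{k+1,m}\gamma$, so the whole statement reduces to reconciling the two leftover non-holomorphic contributions.

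The crux is then the scalar identity
\[-cz + \frac{\text{Im }z}{\text{Im }\tau}\,(c\tau+d) = \frac{\text{Im}(z/j)}{\text{Im}(\tau')},\]
which is exactly what forces the leftover terms to agree. To prove it I would use the Möbius relation $\text{Im}(\tau') = \text{Im}(\tau)/|j|^2$ together with $\text{Im}(z/j) = \text{Im}(z\bar j)/|j|^2$, which collapse the right-hand side to $\text{Im}(z\bar j)/\text{Im}(\tau) = \text{Im}\bigl(z(c\bar\tau+d)\bigr)/\text{Im}(\tau)$. Expanding both sides in real coordinates $z = x+iy$, $\tau = u+iv$, the essential phenomenon is that the purely imaginary contributions on the left, namely $-icy$ coming from $-cz$ and $+icy$ coming from $\tfrac{y}{v}\cdot icv$, cancel, leaving the real quantity $\bigl(y(cu+d)-cxv\bigr)/v$, which is precisely the expanded right-hand side. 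I expect this cancellation to be the one genuinely delicate point: it is exactly where the non-holomorphic correction $2m\,\text{Im }z/\text{Im }\tau$ in $Y_m$ is calibrated against the $z$-derivative of the factor $e(-cmz^2/j)$, and getting the powers of $j$ and the factors of $c$ and $2\pi i$ to line up requires care.

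Finally, for the Jacobi-form conclusion: if $\phi$ transforms like a weight-$k$ index-$m$ Jacobi form then $\phi|_{k,m}\gamma = \phi$, so the intertwining identity gives $(Y_m\phi)|_{k+1,m}\gamma = Y_m(\phi|_{k,m}\gamma) = Y_m\phi$, i.e.\ $Y_m\phi$ is slash-invariant in weight $k+1$ and index $m$. The elliptic transformation law (property (2) of Definition~\ref{JacobiDef}) is preserved by an analogous but shorter calculation: under $z \mapsto z+a\tau+b$ one has $\text{Im}(z+a\tau+b)/\text{Im}(\tau) = \text{Im}(z)/\text{Im}(\tau) + a$, and the extra $2ma$ this contributes cancels the $-2am$ produced by differentiating $e(-m(a^2\tau+2az))$, so the same elliptic factor reappears. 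Together these show $Y_m\phi$ transforms like a Jacobi form of weight $k+1$ and index $m$.
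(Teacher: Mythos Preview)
Your approach is essentially identical to the paper's: both compute $Y_m(\phi|_{k,m}\gamma)$ directly, reduce the claim to the scalar identity
\[
-cz + (c\tau+d)\,\frac{\operatorname{Im} z}{\operatorname{Im}\tau} \;=\; \frac{\operatorname{Im}\!\bigl(z/(c\tau+d)\bigr)}{\operatorname{Im}\!\bigl((a\tau+b)/(c\tau+d)\bigr)},
\]
and then read off the result. You go slightly further than the paper by actually sketching why this identity holds (the paper merely calls it ``a short (but slightly messy) calculation'') and by checking that the elliptic transformation law is also preserved, which the paper's proof omits.
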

\begin{proof}
Let $\phi'(z;\tau):=\frac{1}{2\pi i}\frac{\partial}{\partial z}\phi(z;\tau).$ Using the definitions above we have 

\begin{multline*}
Y_{m} \left(\phi|_{k,m}\gamma \right) = (c\tau+d)^{-k} e\left(\frac{-cmz^2}{c\tau+d}\right) \left[ \phi'\left(\frac{z}{c\tau+d};\frac{a\tau+b}{c\tau+d}\right)(c\tau+d)^{-1} \right.\\
\left.+\left(-\frac{2cmz}{c\tau+d} +2m\frac{\text{Im } z}{\text{Im }\tau}\right)\phi\left(\frac{z}{c\tau+d};\frac{a\tau+b}{c\tau+d}\right)\right].
\end{multline*}

\noindent A short (but slightly messy) calculation shows that 
\begin{align*}
\frac{\text{Im } \frac{z}{c\tau+d}}{\text{Im }\frac{a\tau+b}{c\tau+d} }= -cz+(c\tau+d)\frac{\text{Im } z}{\text{Im }\tau},
\end{align*}
which allows us to turn the above equation into 

\begin{equation*}\begin{split}
Y_{m} \left(\phi|_{k,m}\gamma \right) &= (c\tau+d)^{-k-1} e\left(\frac{-cmz^2}{c\tau+d}\right) \left[ \phi'\left(\frac{z}{c\tau+d};\frac{a\tau+b}{c\tau+d}\right) \right.\\
&\qquad \left.+2m\frac{\text{Im } \frac{z}{c\tau+d}}{\text{Im }\frac{a\tau+b}{c\tau+d} }\phi\left(\frac{z}{c\tau+d};\frac{a\tau+b}{c\tau+d}\right)\right]\\
&= \left(Y_{m} \phi\right) |_{k+1,m}\gamma
\end{split}\end{equation*}

\noindent as desired.
\end{proof}

\newcommand{\Res}{\operatornamewithlimits{Res\ }}
Eichler and Zagier show that the Taylor coefficients with respect to the elliptic variable of a holomorphic Jacobi forms are quasimodular forms \cite{EichlerZagier1985}. The idea is as follows: suppose $\phi(z;\tau)$ is any function which is invariant under the slash operator $|_{k,m}.$ If $\phi(0,\tau)$ is defined, than the transformation laws imply this function in $\tau$ transforms like a weight $k$ modular form. Thus, if $\Phi(z;\tau)$ is a holomorphic Jacobi form of index $m$, then $Y^n_m(\Phi)(0;\tau)$ transforms like a modular form. It is not difficult then to see that the holomorphic component $\left(\frac{1}{2\pi i}\frac{\partial}{\partial z}\right)^n(\Phi)(0;\tau)$ must be quasimodular.

Unfortunately, $F$ and $F^{(p)}$ both have poles at $z=0.$ We can work around this problem by taking a residue at $z=0$, however we must be careful how we do this. The following procedure is similar to that followed by other authors, including Bringmann and Folsom \cite[Section 3]{BringmannFolsom} and Olivetto \cite{Olivetto}. 
Suppose $G(z)$ is any function which is real-analytic near $z=0$ with a singularity of at most finite order at $0$ (i.e there is some positive integer $j$ such that $|z|^{2j} G(z)$ is real-analytic in a neighborhood of $0$). Then $G(z)$ can be written as a Laurant series in $z$ and $\overline z,$ or in polar coordinates by setting $z=r~e^{2 \pi i \theta}$ and $\overline z=r~e^{-2 \pi i \theta}.$ Then we define $\displaystyle \Res_{z=0}\frac{1}{z}G(z)$ to be the limit of the integral

\[
\Res_{z=0}\frac{1}{z}G(z):=\lim_{r\to 0}\frac{1}{2\pi i}\int_{0}^1 \frac{G(re^{2\pi i \theta})}{r~e^{2 \pi i \theta}}r ~d e^{2 \pi i \theta} =\lim_{r\to 0}\int_{0}^1 G(re^{2\pi i \theta})d \theta
\]
whenever this limit exists. This limit will converge so long as no term in the Laurant series expansion for $G(z)$ at $0$ is a negative power of $z\overline z.$ In our case the denominators will only have powers of the holomorphic variable. 

\begin{pr}\label{residue}
Suppose $\phi(z;\tau):\C\times \H\to \C$ is a function which is real-analytic in $z$ near $z=0$ with at most a singularity of finite order at $z=0$ and which transforms like a weight $k$ index $m$ Jacobi form on $\Gamma$. If the residue 
\[\Res_{z=0}\frac{1}{z}e\left(mz\frac{\im z}{\im \tau}\right)\phi(z;\tau)\]

\noindent exists, then it transforms in $\tau$ like a modular form of weight $k$ for $\Gamma$.
\end{pr}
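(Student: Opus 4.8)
The plan is to introduce the non-holomorphic completion
\[\widehat\phi(z;\tau) := e\!\left(mz\frac{\im z}{\im\tau}\right)\phi(z;\tau),\]
which is precisely the function whose $\tfrac1z$-residue at $z=0$ defines our candidate
\[g(\tau) := \Res_{z=0}\frac1z\widehat\phi(z;\tau) = \lim_{r\to 0}\int_0^1\widehat\phi(re^{2\pi i\theta};\tau)\,d\theta.\]
The heart of the argument is to show that $\widehat\phi$ obeys a clean multiplicative transformation law, free of the index exponential: for $\gamma=\begin{pmatrix}a&b\\c&d\end{pmatrix}\in\Gamma$,
\[\widehat\phi\!\left(\frac{z}{c\tau+d};\frac{a\tau+b}{c\tau+d}\right) = (c\tau+d)^k\,\widehat\phi(z;\tau).\]
To prove this I would evaluate $\widehat\phi$ at $\bigl(\tfrac{z}{c\tau+d};\gamma\tau\bigr)$ and substitute the real-analytic identity established in the proof of the previous proposition,
\[\frac{\im\frac{z}{c\tau+d}}{\im\frac{a\tau+b}{c\tau+d}} = -cz+(c\tau+d)\frac{\im z}{\im\tau},\]
into the completing exponential. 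This splits that exponential into $e(-mcz^2/(c\tau+d))$ times $e(mz\,\im z/\im\tau)$, and the first factor cancels exactly against the factor $e(cmz^2/(c\tau+d))$ produced by the weight $k$ index $m$ slash-invariance of $\phi$; what survives is the stated purely multiplicative law.

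Second I would analyze the residue integral itself. Since $\phi$ has at most a finite-order singularity at $z=0$ and is real-analytic there, and since $z\,\im z=(z^2-z\bar z)/(2i)$ makes the completing exponential real-analytic, $\widehat\phi$ admits a Laurent expansion $\widehat\phi(z;\tau)=\sum_{\mu,\nu}c_{\mu,\nu}(\tau)z^\mu\bar z^\nu$ with $\mu,\nu$ bounded below. Substituting $z=\rho e^{2\pi i\theta}$ and integrating over $\theta\in[0,1]$ annihilates every term except the diagonal $\mu=\nu$, leaving $\sum_\mu c_{\mu,\mu}(\tau)\rho^{2\mu}$; the assumed existence of the residue is exactly the statement that $c_{\mu,\mu}=0$ for $\mu<0$, so that $g(\tau)=c_{0,0}(\tau)$, the diagonal constant term. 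The multiplicative law shows the corresponding diagonal coefficients at $\gamma\tau$ differ from those at $\tau$ only by nonzero factors, so the residue exists at $\gamma\tau$ whenever it exists at $\tau$.

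Finally I would combine the two. Writing the defining integral for $g$ at $\gamma\tau$ with elliptic variable $w=re^{2\pi i\theta}$ and applying the multiplicative law with $z=w(c\tau+d)$ gives
\[g\!\left(\frac{a\tau+b}{c\tau+d}\right) = (c\tau+d)^k\lim_{r\to 0}\int_0^1\widehat\phi\bigl(re^{2\pi i\theta}(c\tau+d);\tau\bigr)\,d\theta.\]
The remaining integral is the same residue-average, now over the rescaled and rotated circle of radius $r|c\tau+d|$; substituting into the Laurent expansion, the diagonal term at index $\mu$ picks up a factor $|c\tau+d|^{2\mu}r^{2\mu}$, so as $r\to 0$ only $\mu=0$ survives and the limit is again $c_{0,0}(\tau)=g(\tau)$. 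Hence $g(\gamma\tau)=(c\tau+d)^k g(\tau)$, the weight $k$ transformation law. The step I expect to be the main obstacle is this last one: justifying that the averaged residue is insensitive to the shape of the vanishing contour, i.e.\ that scaling and rotating the circle by the fixed complex number $c\tau+d$ alters only the off-constant diagonal coefficients, which die in the limit. This is precisely what the Laurent bookkeeping in $z$ and $\bar z$ secures, and it is the reason the particular non-holomorphic completion $\widehat\phi$, rather than a naive holomorphic residue, is the right object to work with.
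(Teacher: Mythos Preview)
Your proof is correct and follows essentially the same route as the paper: define the completion $\widehat\phi=e(mz\,\im z/\im\tau)\phi$, verify via the same imaginary-part identity that it satisfies the clean law $\widehat\phi(z/(c\tau+d);\gamma\tau)=(c\tau+d)^k\widehat\phi(z;\tau)$, and then pass this through the residue-average by rescaling the contour by $c\tau+d$. You supply more detail than the paper does on why the averaged residue is insensitive to that rescaling (via the Laurent bookkeeping in $z,\bar z$), but the underlying argument is the same.
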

\begin{proof}
We begin by noting that the function $\Phi(z;\tau):= e\left(mz\frac{\im z}{\im \tau}\right)\phi(z;\tau)$ has a particularly clean transformation law: 
\[\Phi\left(\frac{z}{c\tau+d};\frac{a\tau+b}{c\tau+d}\right)=(c\tau+d)^{k}\Phi(z;\tau).\]

\noindent This follows from the identity 
\[\frac{cz^2}{c\tau+d}+\frac{z}{c\tau+d}\frac{\im \frac{z}{c\tau+d}}{\im \frac{a\tau+b}{c\tau+d}}=z\frac{\im z}{\im \tau}.\]

\noindent As above, set $z=re^{2\pi i \theta},$ and let 
\[R(\tau):=\text{Res}_{z=0}\frac{1}{z}\Phi(z;\tau)=\lim_{r\to 0} \int_{0}^1\Phi(z;\tau)d \theta.\]

\noindent Then if we let $z'=z(c\tau+d)=r'e^{2\pi i \theta'}$, we have that  
\begin{align*}
R\left(\frac{a\tau+b}{c\tau+d}\right)&=\lim_{r\to 0} \int_{0}^1\Phi\left(z;\frac{a\tau+b}{c\tau+d}\right)d \theta\\
&=(c\tau+d)^{k}\lim_{r'\to 0} \int_{0}^1\Phi\left(z';\tau\right)d \theta'\\
&=(c\tau+d)^{k}R(\tau).
\end{align*}
\end{proof}

We are now ready to prove part (d) of Theorem~\ref{mainthm}.

\begin{proof}[Proof of Theorem~\ref{mainthm}(d)]Let $\widetilde \calQ_k(\tau)$ and $\widetilde \calQ^{(p)}_k(\tau)$ be the results of applying Proposition \ref{residue} to the functions $Y_{-2}^{k-1}F(z;\tau)$ and $Y_{-2}^{k-1}F^{(p)}(z;\tau)$ respectively. The definition of $Y_{-2}$ implies that $\widetilde \calQ_k(\tau)$ is a function of $y:=\im \tau$ and $\tau$, and can be written in the form 
\[\widetilde \calQ_k(\tau)=\sum_{j=0}^{k-1}\left(\frac{1}{y}\right)^jf_j(\tau),\]
as a polynomial in $1/y$ whose coefficients, $f_m,$ are holomorphic $q$-series. In fact we see that $f_0=2^{k-1}\calQ_k(\tau),$ and each of the remaining $f_m$ can be written in terms derivatives of brackets $\calQ_\ell(\tau)$ with $\ell<k.$ Since $\widetilde \calQ_k(\tau)$ transforms like a modular form, it follows that $\calQ_k(\tau) \in \widetilde M_{k}$. A nearly identical argument holds for $\widetilde \calQ_k^{(p)}(\tau);$ since $\widetilde \calQ_k^{(p)}(\tau)$ transforms like a modular form for $\Gamma_0(p^2),$ we must have that $\calQ_k^{(p)}(\tau)\in \widetilde M_{k}(p^2)$. This completes the proof of Theorem $\ref{mainthm}$ (d).
\end{proof}

\subsection{Finding an explicit expression}
Just as we can write $G_k^{(p)}$ in terms of $G_k$, we can write $\calQ_k^{(p)}$ in terms of $\calQ_k$.  However, there is an extra correction term
\begin{thmpart}[part (e) of Theorem~\ref{mainthm}]
Let $p \geq 5$ be prime.  We have that
\[\calQ_k^{(p)}(\tau) = \calQ_k(\tau) - p^{k - 1}\calQ_k(p^2 \tau) - p^{k - 1}f_k^{(p)},\] where \[f_k^{(p)}:=-\sum_{\substack{n\geq1\\(n,p)=1}}\sum_{M\geq 0}(-1)^n(2M+1)^{k-1}q^{\frac{n^2+np(2M+1)}{2}}.\]
\end{thmpart}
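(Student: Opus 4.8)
The plan is to reduce the claimed $q$-series identity to a single functional identity among the Jacobi forms $F$, $F^{(p)}$, and a rescaling of $F$, and then to apply the (linear) regularized extraction of Theorem~\ref{mainthm}(d) to both sides. The starting point is Proposition~\ref{pr2.1}: averaging over the $p$-torsion translates $\tfrac1p\sum_{j=0}^{p-1}F(z+j/p,\tau)$ is a root-of-unity filter that selects exactly the Fourier modes $\zeta^{R}$ of $F$ with $p\mid R$. Thus, writing $2F=\sum_{R\text{ odd}}A_R(\tau)\zeta^R$, Proposition~\ref{pr2.1} says that $2F^{(p)}$ is the sum of the modes with $p\nmid R$, which I would reconcile directly with the explicit expansions \eqref{pr2.1claim1}--\eqref{pr2.1claim2}.

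The first computation is the Fourier coefficients $A_R(\tau)$. Starting from \eqref{BOThm6.5} and setting $w=\zeta^2$, one rewrites $2F=-w^{1/2}(q)_\infty^2/\big((w)_\infty(q/w)_\infty\big)$; the classical identity $\tfrac{(q)_\infty^2}{(w)_\infty(q/w)_\infty}=\sum_{n\in\Z}\tfrac{(-1)^nq^{n(n+1)/2}}{1-wq^n}$, expanded geometrically in the region implicit in \eqref{pr2.1claim1}, yields
\[A_R(\tau) = -1 - \sum_{n\geq 1}(-1)^n q^{(n^2+nR)/2}\qquad (R>0\text{ odd}),\]
where the constant $-1$ reproduces the polar term $\tfrac12(\zeta-\zeta^{-1})^{-1}$ and $A_{-R}=-A_R$ since $F$ is odd in $z$. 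Next I split the $p$-divisible part of $F$ into a principal piece and a holomorphic correction. Since $2F(pz,p^2\tau)=\sum_{R'\text{ odd}}A_{R'}(p^2\tau)\,\zeta^{pR'}$ has polar term $\tfrac12(\zeta^p-\zeta^{-p})^{-1}$, the difference $2\Phi:=\sum_{p\mid R}A_R(\tau)\zeta^R-2F(pz,p^2\tau)$ is holomorphic at $z=0$, and its $\zeta^{pr}$-coefficient (with $r=2M+1>0$) is $A_{pr}(\tau)-A_r(p^2\tau)$. Substituting $n=pm$ shows the terms of $A_r(p^2\tau)$ are precisely the $p\mid n$ terms of $A_{pr}(\tau)$ (the sign $(-1)^n$ is unchanged since $p$ is odd), so
\[A_{pr}(\tau)-A_r(p^2\tau) = -\sum_{\substack{n\geq1\\(n,p)=1}}(-1)^n q^{(n^2+npr)/2},\]
which is exactly the summand of $f_k^{(p)}$. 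This establishes the clean functional identity $F^{(p)} = F - F(pz,p^2\tau) - \Phi$ with $\Phi$ holomorphic at $0$.

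Finally I apply the extraction $\mathcal L_k(H):=2^{k-2}(k-1)!\,[(4\pi i z)^{k-1}]\,(2H)$. A short computation gives, for even $k$ and an odd Jacobi form $2H=\sum_R a_R\zeta^R$, the value $\mathcal L_k(H)=\sum_{R>0\text{ odd}}a_R R^{k-1}$ (the parity fold uses that $R^{k-1}$ is odd and $a_{-R}=-a_R$). Applied to the four pieces this gives $\mathcal L_k(F)=\calQ_k$, $\mathcal L_k(F^{(p)})=\calQ_k^{(p)}$, $\mathcal L_k\big(F(pz,p^2\tau)\big)=p^{k-1}\calQ_k(p^2\tau)$ (the $p^{k-1}$ coming from $R\mapsto pR$), and $\mathcal L_k(\Phi)=\sum_{M\geq0}\big(A_{p(2M+1)}-A_{2M+1}(p^2\tau)\big)(p(2M+1))^{k-1}=p^{k-1}f_k^{(p)}$, whence the theorem. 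As a consistency check, the support statement follows from $N\equiv n^2\cdot 2^{-1}\pmod p$, which gives $\leg{N}{p}=\leg{2}{p}$; for odd $k$ both sides vanish by the Remark in Section~\ref{definitions}.

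The main obstacle is making the extraction rigorous for the two pieces singular at $z=0$ (namely $F$ and $F(pz,p^2\tau)$), where the bare sum $\sum_{R>0}A_RR^{k-1}$ diverges through the polar constant $-1$; this is precisely what the residue regularization of Theorem~\ref{mainthm}(d) handles, and I must verify it is applied consistently across the identity. The cleanest way to avoid the convergence bookkeeping is to check the identity coefficientwise in $q$: for each $N\geq1$ every term contributes a finite sum, handled by the computation above, while the constant term reduces to $\beta_k^{(p)}=(1-p^{k-1})\beta_k$ together with the facts that $f_k^{(p)}$ has no constant term and $\calQ_k(p^2\tau)$ shares the constant term of $\calQ_k$.
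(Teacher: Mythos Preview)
Your proposal is correct and follows essentially the same route as the paper: both expand $F$ via the crank/partial-fraction identity to obtain the odd-$\zeta$ coefficients, use Proposition~\ref{pr2.1} to isolate the $p$-divisible modes, peel off $F(pz,p^2\tau)$ by the substitution $n\mapsto pn$, and then read off the $\calQ_k$-identity from the $(k-1)$st Taylor coefficient in $z$. The only notable difference is how the pole at $z=0$ is handled---the paper subtracts $\tfrac{1}{8p\pi i z}$ and differentiates, whereas you verify the identity coefficientwise in $q$ (finite sums for $N\ge1$, direct constant-term check via $\beta_k^{(p)}=(1-p^{k-1})\beta_k$); both are sound.
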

\begin{proof}
The function $F(z;\tau)$ is closely related to the partition crank generating function, which is given \cite{AndrewsGarvan1988, Zwegers2010} by

\begin{equation*}
C(z;\tau):=\prod_{n\geq1}\frac{1-q^n}{(1-\zeta q^n)(1-\zeta^{-1} q^n)}=\frac{1-\zeta}{(q)_\infty}\sum_{n\in \Z}\frac{(-1)^nq^{\frac{n(n+1)}{2}}}{1-\zeta q^n}.
\end{equation*}
\noindent Using this identity, we obtain an alternative expression for $F,$
\begin{align*}
F(z;\tau)&=-\frac{1}{2} \zeta\sum_{n\in \Z}\frac{(-1)^nq^{\frac{n(n+1)}{2}}}{1-\zeta^2 q^n}\\
&=\frac{1}{2} ~ \frac{1}{\zeta-\zeta^{-1}}-\frac{1}{2}\sum_{n\geq1}\sum_{m\geq 0}(-1)^n(\zeta^{2m+1}-\zeta^{-2m-1})q^{\frac{n(n+1)}{2}+m \, n}.
\end{align*}
Using Proposition \ref{pr2.1}, we have that the difference $F(z;\tau)-F^{(p)}(z;\tau)$ simply isolates the coefficients of powers of $\zeta$ divisible by $p$. When $p$ is an odd prime, we have that $F(z;\tau)-F^{(p)}(z;\tau)$ is given by
\[
\frac{1}{2} ~ \frac{1}{\zeta^p-\zeta^{-p}} -\frac12\sum_{n\geq1}\sum_{M\geq 0}(-1)^n(\zeta^{p(2M+1)}-\zeta^{-p(2M+1)})q^{\frac{n^2+np(2M+1)}{2}}.
\]
Here we have made the substitution $2m+1=p(2M+1)$ when $p$ divides $2m+1.$ If we separate the terms where $p$ divides $n$, $F(z;\tau)-F^{(p)}(z;\tau)$ becomes

\begin{equation}\label{diffexp}
F(pz;p^2\tau)-\frac12\sum_{\substack{n\geq1\\(n,p)=1}}\sum_{M\geq 0}(-1)^n(\zeta^{p(2M+1)}-\zeta^{-p(2M+1)})q^{\frac{n^2+np(2M+1)}{2}}.
\end{equation}

Let $\tilde F^{(p)}(z;\tau):=F(z;\tau)-F^{(p)}(z;\tau)-\frac{1}{8p\pi i z},$ so that we have removed the pole at $z=0.$ Then if $k$ is even we use equation (\ref{diffexp}) above to find have that
\begin{align*}
\left.\left(\frac{1}{2\pi i}\frac{d}{dz}\right)^{k-1}\tilde F(z;\tau)\right|_{z=0}&=\calQ_k(\tau)-\calQ^{(p)}(\tau) &\\and
&=p^{k-1}\calQ_k(p^2\tau)+p^{k-1}f_k^{(p)},
\end{align*}

\noindent where 
\[f_k^{(p)}:=-\sum_{\substack{n\geq1\\(n,p)=1}}\sum_{M\geq 0}(-1)^n(2M+1)^{k-1}q^{\frac{n^2+np(2M+1)}{2}}.\]
Note that since $(n,p)=1,$ then for the exponent $N=\frac{n^2+np(2M+1)}{2}$ we have that $\left(\frac{N}{p}\right)=\left(\frac{2}{p}\right).$
\end{proof}


\begin{backmatter}



\section*{Acknowledgements}
The authors began jointly discussing this work at the Spring School on Characters of Representations and Modular Forms held at the Max Planck Institute in Bonn, Germany in March 2015 and are grateful for the good hospitality and excellent conference.  The authors would also like to thank Don Zagier for his inspiring work, and Ken Ono and the referee for their helpful comments.  The second and third authors thank the National Science Foundation for its support. 

\bibliographystyle{bmc-mathphys} 
\bibliography{refs}      

\end{backmatter}
\end{document}